\theoremstyle{plain}
\newtheorem{thm}{Theorem}[section]
\newtheorem{lem}[thm]{Lemma}
\newtheorem{prop}[thm]{Proposition}
\newtheorem{rem}[thm]{Remark}
\newcommand{\Sth}{S^3}
\newcommand{\Rth}{\mathbb{R}^3}
\newcommand{\Hth}{\mathbb{H}^3}
\newcommand{\GM}{\Gamma_M}
\newcommand{\GK}{\Gamma_K}
\newcommand{\GQ}{\Gamma_Q}
\newcommand{\PSLTC}{PSL(2, \mathbb{C})}
\newcommand{\OD}{\mathbb{O}_d}
\newcommand{\PSLTOD}{PSL(2, \OD)}
\newcommand{\PSLTA}{PSL(2, \mathbb{A})}
\newcommand{\SOTR}{SO(3, \mathbb{R})}
\newcommand{\Z}{\mathbb{Z}}
\newcommand{\Q}{\mathbb{Q}}
\def\co{\colon\thinspace}
\begin{document}

\title{Small knot complements, exceptional surgeries, and hidden symmetries}

\author{Neil Hoffman}
\address{Department of Mathematics\\
Boston College\\ 
Chestnut Hill, MA 02467}
\email{neil.hoffman@bc.edu}

\date{\today}

\begin{abstract}
This paper provides two obstructions to small knot complements in $\Sth$ admitting hidden symmetries. The first obstruction is being cyclically commensurable with another knot complement. This result provides a partial answer to a conjecture of Boileau, Boyer, Cebanu, and Walsh. We also provide a second obstruction to admitting hidden symmetries in the case where a small knot complement covers a manifold admitting some 
symmetry and at least two exceptional surgeries. 
 \end{abstract}

\maketitle
\section{Introduction}\label{sect:intro}

Two finite volume, orientable, hyperbolic 3-orbifolds are \emph{commensurable} if they share a common finite sheeted cover. 
Using Mostow-Prasad rigidity (see \cite{Mostow}, \cite{Prasad}), we may restate this definition in terms of Kleinian groups with finite co-volume. Namely, $\Gamma_1$ and $\Gamma_2$ are \emph{commensurable} if there exists $g \in\PSLTC$ such that $\Gamma_1$ and $g\Gamma_2 g^{-1}$ share a common finite index subgroup.   
Commensurability defines an equivalence relation both on the set of hyperbolic 3-orbifolds and on their associated Kleinian groups. 
One way to distinguish these equivalence classes, called commensurability classes, is to identify elements that are rare in a given commensurability class. Conjecturally, there are at most three hyperbolic knot complements in a given commensurability class (see \cite[Conj 5.2]{RW}).  Both here and throughout the paper, we will use the term \emph{knot complement} to refer to a space homeomorphic to $\Sth-K$ where $K$ is a smoothly embedded knot in $\Sth$.

In \cite{BBCW}, the conjecture is shown to hold if the knot complements do not admit \emph{hidden symmetries}, ie the knot complements do not cover an orbifold with a rigid cusp (see $\S$\ref{sect:background}). Furthermore, two commensurable knot complements either admit hidden symmetries, or are \emph{cyclically commensurable}, ie they both cyclically cover the same orbifold (see \cite[Thm 1.4]{BBCW}). For a \emph{small} knot complement, ie the knot complement does not contain a closed, embedded, essential surface, the theorem below shows these are mutually exclusive, which is a partial solution to Conjecture 4.14 of \cite{BBCW}.

\begin{thm}\label{mainthm1}
Let $\Sth-K$ be a small hyperbolic knot complement. 
If $\Sth-K$ admits hidden symmetries, it is not cyclically commensurable with another 
knot complement.
\end{thm}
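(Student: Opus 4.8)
The plan is to argue by contradiction. Suppose $\Sth-K$ is a small hyperbolic knot complement that admits hidden symmetries and is cyclically commensurable with a knot complement $\Sth-K'$ with $K'\ne K$. Since admitting hidden symmetries is the assertion that the orientation‑preserving commensurator of $\GK$ strictly contains its normalizer, and the commensurator depends only on the commensurability class, $\Sth-K'$ admits hidden symmetries as well. By the work of Neumann--Reid on cusped Kleinian groups, $\GK$ and $\Gamma_{K'}$ are then (conjugate into) a Bianchi group $\PSLTOD$ with $d\in\{1,3\}$, the cusp fields are $\Q(i)$ or $\Q(\sqrt{-3})$, and the common commensurator quotient $Z$, with $\Hth/C^{+}(\GK)=Z=\Hth/C^{+}(\Gamma_{K'})$, has a single cusp, which is \emph{rigid}: one of $S^2(2,4,4)$, $S^2(3,3,3)$, $S^2(2,3,6)$.

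Next I would feed in the structural description of cyclic commensurability from \cite{BBCW}: there is a one‑cusped hyperbolic orbifold $O$ which each of $\Sth-K$ and $\Sth-K'$ covers cyclically, with $\GK,\Gamma_{K'}\trianglelefteq\pio(O)$ and $\pio(O)\le C^{+}(\GK)$, so that $O$ covers $Z$. Two reductions come for free. First, $O$ is \emph{small}: an essential closed $2$‑suborbifold of $O$ would pull back to a closed essential surface in the finite cover $\Sth-K$. Second, the cusp of $O$ is a cyclic quotient of the torus cusp of $\Sth-K$, hence Euclidean, and it covers the rigid cusp of $Z$; tracking this forces the cusp of $O$ to be a torus and the deck group of $\Sth-K\to O$ to act by a free symmetry of $\Sth-K$, which by the resolved Smith conjecture must be a free linear action on $\Sth$. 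Thus $O$ is the complement of a knot $\bar K$ in a lens space $L(p,q)$, with $\Sth-K$ the $p$‑fold cyclic cover induced by $H_1$, and similarly $O=L(p',q')-\bar K'$ from the $K'$ side.

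The heart of the argument is then a Dehn‑filling count on $O$. Filling $O$ along the slope $\alpha$ bounding the meridian disk of the lens‑space side yields $O(\alpha)=L(p,q)$, whose $p$‑fold cyclic cover is $\Sth$ with $\bar K$ lifting to $K$; the slope $\beta$ associated with $K'$ gives $O(\beta)=L(p',q')$. If $\alpha=\beta$ then $L(p,q)=L(p',q')$ as fillings of $O$ and the two covering data coincide, forcing $\Sth-K\cong\Sth-K'$, hence $K=K'$ by Gordon--Luecke, a contradiction; so $\alpha\ne\beta$. Hence $O$ is a small one‑cusped hyperbolic manifold with two distinct lens‑space fillings, so the Cyclic Surgery Theorem gives $\Delta(\alpha,\beta)\le 1$. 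On the other hand $O$ covers $Z$, so its cusp shape lies in $\Q(i)$ or $\Q(\sqrt{-3})$ and is pinned down (up to the order of the cyclic cover) by the rigid cusp of $Z$; pushing the fillings $\alpha,\beta$ down to $Z$ shows they must project either to a common orbifold filling of the rigid cusp — which, because a torus covering a rigid cusp has a unique primitive slope over each slope downstairs, forces $\alpha=\beta$ — or to two distinct non‑hyperbolic orbifold fillings of $Z$, which the geometry of rigid cusps together with smallness excludes.

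I expect this last reconciliation to be the main obstacle. Making it precise means combining three ingredients that are each routine in isolation but awkward to run jointly: the explicit cusp shape ($i$ or $e^{\pi i/3}$) and the resulting very short list of slopes on $O$ whose fillings can be spherical; the Culler--Shalen / Boileau--Boyer bound on exceptional slopes forced by smallness of $O$; and the behaviour of orbifold Dehn filling on a rigid cusp under the cover $\partial O\to\partial Z$. The work is to show these constraints are incompatible uniformly over the three rigid cusp types and all admissible deck groups; everything preceding it is bookkeeping with the commensurability diagram.
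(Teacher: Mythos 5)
There is a genuine gap, and in fact two of your structural claims are wrong before you even reach the step you yourself flag as the main obstacle. First, the reduction of the common cyclic quotient $O$ to the complement of a knot in a \emph{lens space} via a free action does not hold: the resolved Smith conjecture says a cyclic action with fixed points is conjugate to a rotation, not that the deck action on $\Sth-K$ is free. A periodic knot gives a cyclic quotient of $\Sth-K$ which is a knot complement in an \emph{orbi-lens space} with nonempty singular locus (unknotted circles), and this is exactly the case one cannot wave away -- in the paper it is the residual $S^2(2,4,4)$ situation that has to be handled separately by \cite[Thm 1.8]{BBCW}. Once $O$ may carry singular locus, your appeal to the Cyclic Surgery Theorem for two lens-space fillings of a manifold no longer applies as stated. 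Second, Neumann--Reid do not give that $\GK$ and $\Gamma_{K'}$ are conjugate into a Bianchi group; hidden symmetries only force the \emph{cusp field} to be $\Q(i)$ or $\Q(\sqrt{-3})$ (conjugacy into $\PSLTOD$ would mean arithmeticity, i.e.\ the figure-eight knot). Third, the decisive step -- reconciling the two distinct slopes on $O$ with the covering $O\rightarrow Z$ of the rigid-cusped commensurator quotient -- is left open, and the dichotomy you propose (``project to a common orbifold filling of $Z$ or to two distinct non-hyperbolic fillings of $Z$'') is not meaningful as stated: a rigid cusp cross-section $S^2(2,3,6)$, $S^2(2,4,4)$, $S^2(3,3,3)$ does not admit a circle's worth of filling slopes, so there is no well-defined ``pushing the fillings down to $Z$.'' Smallness enters your sketch only through exceptional-slope bounds, which is not enough to close this.

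For comparison, the paper's proof uses smallness in a completely different and essential way: by Bass's theorem $\GK$ has integral traces, so (Lemmas \ref{lem:intTracesUpperTri} and \ref{lem:xIsAUnit}) one gets a representation into $\PSLTA$ in which knot meridians are upper triangular with \emph{unit} upper-right entries. Since the torus-cusped orbilens quotient $Q_T$ contains two distinct knot meridians, its peripheral subgroup $P_T$ is forced to be the minimal-index abelian subgroup of the rigid peripheral group $P_Q$ of the commensurator quotient, so the covering degree $Q_T\rightarrow Q$ is at most $3$, $4$, or $6$ according to cusp type; Lemma \ref{lem:lowerOrbLensBound} rules these degrees out except for the $S^2(2,4,4)$ loop case, where $Q_T\cong L(p,q;n,m)-K'$ with $n,m\neq 1$ and \cite[Thm 1.8]{BBCW} finishes. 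If you want to salvage your route, you would at minimum have to work with orbi-lens quotients rather than lens spaces, replace the Cyclic Surgery Theorem by an argument valid for orbifold fillings, and find a genuine substitute for the ill-defined ``filling of a rigid cusp'' step.
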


Typically, obstructions to covering rigid cusped orbifolds arise from the computation of 
arithmetic invariants, eg the invariant trace field (see $\S$\ref{sect:background}). 
The following theorem provides an obstruction to 
covering a rigid cusped orbifold by combining more topological and geometric criterion namely smallness, exceptional surgeries, and in the $S^2(2,3,6)$ case, symmetry. 

\begin{thm}\label{mainthm2}
Let $M$ be a manifold covered by a small hyperbolic knot complement $\Sth-K$
that is not the figure 8 knot complement. 
\begin{enumerate}
\item If $M$ admits two exceptional
surgeries, then $M$ does not cover an orbifold with a $S^2(2,4,4)$ or $S^2(3,3,3)$ cusp. 
\item If $M$ admits two exceptional
surgeries and a non-trivial symmetry, then $M$ does not cover an orbifold with a $S^2(2,3,6)$ cusp. 
\end{enumerate}
\end{thm}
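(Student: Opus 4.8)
The plan is to argue by contradiction and reduce everything to a statement about exceptional Dehn fillings of $\Sth-K$ itself. Suppose $M$ covers an orbifold $\mathcal{Q}$ with a rigid cusp of type $S^2(2,4,4)$, $S^2(3,3,3)$, or $S^2(2,3,6)$. Composing $\Sth-K\to M\to\mathcal{Q}$ shows $\Sth-K$ covers an orbifold with a rigid cusp, so $\Sth-K$ admits hidden symmetries; by Neumann--Reid the cusp field is then $\Q(\sqrt{-1})$ (in the $S^2(2,4,4)$ case) or $\Q(\sqrt{-3})$ (in the $S^2(3,3,3)$ or $S^2(2,3,6)$ cases). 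Let $\mathcal{O}$ be the minimal orbifold in the commensurability class (unique by Margulis in the non-arithmetic case, and by maximality of the Bianchi group over a class-number-one field in the arithmetic case); then $\mathcal{O}$ also has a rigid cusp of the same flavor, and both $M$ and $\Sth-K$ cover $\mathcal{O}$. Since the figure $8$ knot complement is the only arithmetic knot complement (Reid) and has trace field $\Q(\sqrt{-3})$, and it is excluded by hypothesis, we may treat $\Sth-K$ as non-arithmetic whenever that is convenient.

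Next I transfer the exceptional surgeries up the tower. The cover $\Sth-K\to M$ restricts on the cusps to a covering of tori $T_K\to T_M$; a slope $\gamma$ on $T_M$ pulls back to a slope $\tilde\gamma$ on $T_K$, distinct slopes pull back to distinct slopes, and $(\Sth-K)(\tilde\gamma)$ is a finite cover of $M(\gamma)$. Because a finite cover of a closed non-hyperbolic $3$-manifold is non-hyperbolic, the two exceptional fillings of $M$ produce two distinct exceptional (non-hyperbolic) filling slopes $\tilde\gamma_1,\tilde\gamma_2$ on $\Sth-K$; together with the meridian $\mu$ of $K$ (whose filling is $\Sth$), the exterior $\Sth-K$ carries at least two, and generically three, exceptional slopes.

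The crux is to play the rigid cusp of $\mathcal{O}$ against bounds on exceptional Dehn filling. The maximal cusp of $\Sth-K$ covers the rigid Euclidean $2$-orbifold that is the cusp of $\mathcal{O}$, so its translation lattice is a sublattice of the square lattice $\Z[\sqrt{-1}]$ or the hexagonal lattice $\Z[\sqrt{-3}]$; the exceptional slopes have length at most $6$ (the $6$-theorem), their pairwise distances are bounded (Agol, Lackenby, Gordon), and smallness of $K$ restricts them further through Culler--Shalen-type boundary-slope estimates. I plan to show these constraints are incompatible with the configuration $\{\mu,\tilde\gamma_1,\tilde\gamma_2\}$ forced above --- equivalently, that a small non-figure-$8$ knot complement with such a rigid cusp admits essentially only the meridian as an exceptional slope --- contradicting $\tilde\gamma_1\neq\tilde\gamma_2$. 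I expect this argument to close cleanly in the $S^2(2,4,4)$ and $S^2(3,3,3)$ cases, giving part (1), but to leave one residual configuration open in the $S^2(2,3,6)$ case (which reflects the fact that the minimal orbifold of the figure $8$ knot complement has an $S^2(2,3,6)$ cusp); there I feed in the non-trivial symmetry of $M$, which together with the order-$6$ rotation at the cusp of $\mathcal{O}$ supplies the missing constraint, yielding part (2). The main obstacle is precisely this last step, together with making the cusp-geometry-versus-exceptional-slope count sharp enough: understanding how a symmetry of $M$ interacts with the hidden symmetries in the $S^2(2,3,6)$ case is the delicate point, and is the reason (2) requires the extra hypothesis.
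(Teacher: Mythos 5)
There is a genuine gap at the very first substantive step, and the remainder of the argument is a plan rather than a proof. Your transfer of exceptional surgeries from $M$ up to $\Sth-K$ does not work as stated: if $\gamma$ is a slope on the cusp of $M$ and $\tilde\gamma$ its pullback to the cusp of $\Sth-K$, the manifold filling $(\Sth-K)(\tilde\gamma)$ is in general \emph{not} a cover of $M(\gamma)$; what covers $M(\gamma)$ is the orbifold filling of $\Sth-K$ along $\tilde\gamma$ with a cone axis of order $k$, where $k$ is the degree with which the filling circle upstairs wraps the slope $\gamma$ downstairs. Non-hyperbolicity of $M(\gamma)$ therefore gives no information about the honest filling $(\Sth-K)(\tilde\gamma)$, and the slope lengths are multiplied by $k$, so neither the Six Theorem nor intersection-number/boundary-slope bounds on $\Sth-K$ are directly inherited. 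Beyond that, the crux of your argument --- showing the cusp lattice constraints are ``incompatible with the configuration'' --- is only announced (``I plan to show\dots'', ``I expect this argument to close cleanly''), and part (2), where the symmetry of $M$ must be used, is explicitly left open; so the theorem is not actually established in any case.

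For comparison, the paper never moves the surgeries to $\Sth-K$. It works with $M$ and $Q$ directly: smallness gives integral traces (Bass), and Lemmas \ref{lem:intTracesUpperTri} and \ref{lem:xIsAUnit} normalize $\Gamma_Q\subset\PSLTA$ so that a peripheral parabolic of $M$ is $\begin{pmatrix}1&1\\0&1\end{pmatrix}$, which forces the maximal-cusp parameter $c\geq 1$; the Six Theorem applied to the two exceptional slopes of $M$ then pins down $P_M$ and bounds the cusp volume of $M$ from above, while the cusp-killing homomorphism and the structure of finite subgroups of $\SOTR$ give lower bounds on $\deg(M\to Q)$ (a multiple of $12$ for $S^2(3,3,3)$ and $S^2(2,3,6)$, at least $24$ for $S^2(2,4,4)$, Lemma \ref{lem:degBound} and Prop \ref{prop:nofourloop}). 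Together these trap the cusp volume of $Q$ in Adams' finite list; the arithmetic entries are excluded because $M$ is not covered by the figure-8 complement, and the remaining candidates are eliminated by explicit horoball/isometric-sphere computations combined with the unit condition of Lemma \ref{lem:xIsAUnit} (Lemmas \ref{nosqrt21over12} and \ref{noLen6exceptionalslope}). In the $S^2(2,3,6)$ case with $\deg(p)=12$, the symmetry hypothesis enters through a normalizer/coset argument on $P_Q\cdot\GM$ that produces an intermediate $S^2(3,3,3)$-cusped orbifold $Q'$ with $M\to Q'$ of degree $6$, contradicting the degree bound. None of these mechanisms (integral normalization, degree bounds, Adams' classification, the horoball eliminations, the normalizer argument) appear in your proposal, and your intended substitutes (pulled-back slopes, Culler--Shalen boundary-slope estimates) would need the flawed transfer step to even get started.
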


This paper is organized as follows. In $\S$\ref{sect:background}, we provide the necessary background including a description of many of the smallest volume cusped orbifolds. We then establish two lemmas that together characterize the representations of the fundamental groups of small knot complements in $\PSLTC$ in $\S$\ref{sect:knotReps}. In $\S$\ref{sect:Thm1Proof}, we prove Theorem \ref{mainthm1}. And in the following section, we prove Theorem \ref{mainthm2}. In $\S$\ref{sect:FurtherRemarks}, we provide some final remarks.

\subsection{Acknowledgements}
This work started under the direction of my PhD advisor Alan Reid, who patiently and thoughtfully provided feedback on this work at every stage of development. He deserves many thanks for this. Also, I would like to thank
Colin Adams, Ken Baker, Jason Deblois,  Cameron Gordon, Brandy Guntel, Keenan Kidwell,  John Luecke, Matthew Stover, and Genevieve Walsh for a number of helpful conversations. 

\section{Background}\label{sect:background}

We begin by establishing some notation. 
First, we consider all groups as subgroups of $\PSLTC$ with finite covolume 
unless explicitly stated otherwise. And therefore, we consider all manifolds and orbifolds to be both hyperbolic and orientable unless explicitly stated otherwise. 
We denote by $\GQ = \pi_1^{orb}(Q)$. Hence, we may say $Q=\Hth/\GQ$. In the case of knot complements, we say that $\GK=\pi_1(\Sth-K)$.
Furthermore, it will prove convenient to discuss the subgroup that fixes $\infty$, we call such a subgroup the \emph{peripheral subgroup} of $\GQ$ (or $\GK$) and denote it by $P_Q$ (or $P_K$). 

A distinct advantage of considering groups as discrete subgroups of $\PSLTC$ is that they carry a considerable amount of number theoretic information. In order to fully utilize this data, we observe the following standard notation. First, we denote by $\mathbb{A}$ is the set of algebraic integers in $\mathbb{C}$.
If $L$ is a number field, we will use $\mathcal{O}_L$ to denote the ring of integers in $L$ and $\alpha\mathcal{O}_L$ to denote the principal ideal generated by $\alpha\in\mathcal{O}_L$. If $I$ is a non-principal ideal in $\mathcal{O}_L$, we will denote it by $<\alpha, \beta>$
where I is generated by 
 the $\mathcal{O}_L$ linear combinations of $\alpha$ and $\beta$.

For a group $\Gamma$, $g\in\Gamma$ is a two-element coset of the form 
$$\left\{ \begin{pmatrix}a & b\\ c & d\end{pmatrix}, \begin{pmatrix} -a &-b\\ -c & -d\end{pmatrix} \right\}.$$ 
Later, we will abuse notation and just refer to an element by one of the above matrices.
We say the trace of $g$ or $tr(g)=|a+d|$. If for all $g\in \Gamma$, $tr(g) \in \mathbb{A}$, we say $\Gamma$ has \emph{integral traces}. Otherwise, $\Gamma$ has \emph{non-integral traces}. In addition, if $\Gamma$ integral traces then $\Gamma$ admits a representation into $\PSLTA$, a fact exploited in Lemma \ref{lem:intTracesUpperTri}.

Next,  the \emph{invariant trace field} of $\Gamma$ or $k\Gamma$ to be
$\Q(\{tr(g^2)|g\in\Gamma\})$. Reid showed that this is an invariant of the commensurability class of $\Gamma$ (see \cite{Reid2}). 

A cusped 3-orbifold $Q=\Hth/\Gamma$ (or $\Gamma$) is \emph{arithmetic} if $\Gamma$ is commensurable with $\PSLTOD$ where
$\OD$ is the ring of integers in $\Q(\sqrt{-d})$ and $d$ is a positive square-free integer.  Otherwise, $Q$ (or $\Gamma$) is \emph{non-arithmetic}.

\subsection{Rigid Cusps}\label{sect:rigidCusps}
The cusps of an orientable 3-orbifold are one of five types (see \cite[$\S$3]{Walsh2011} for background). The \emph{non-rigid} Euclidean 2-orbifolds are the torus and $S^2(2,2,2,2)$. The \emph{rigid} orbifolds are  $S^2(2,4,4)$, $S^2(2,3,6)$, and $S^2(3,3,3)$.  

It is worth mentioning that the definition of hidden symmetries in the introduction is slightly atypical. Traditionally, a group $\Gamma$ admits hidden symmetries if $[Comm(\Gamma):N(\Gamma)]>1$, where 
$$Comm(\Gamma)=\{g\in\PSLTC| [g\Gamma g^{-1}:g\Gamma g^{-1}\cap \Gamma]< \infty \mbox{ and }  [\Gamma:g\Gamma g^{-1}\cap \Gamma]< \infty\}$$
and $N(\Gamma)$ is the normalizer of $\Gamma$ in $\PSLTC$. We say $Q$ admits hidden symmetries if $\Gamma_Q$ admits hidden symmetries. 
We say $Comm(\Gamma)$ is the (orientable) commensurator. If $\Gamma$ is non-arithmetic, then $Comm(\Gamma)$ is discrete and we can define $\Hth/Comm(\Gamma)$ as the (orientable) commensurator quotient (see \cite{Margulis1991}). By construction, for non-arithmetic $\Gamma$,  $\Hth/Comm(\Gamma)$
is covered by all orbifolds its commensurability class and therefore, it is the smallest volume
orbifold in the commensurability class of $\Hth/\Gamma$. 
Also, for a non-arithmetic knot complement, having hidden symmetries is equivalent to $\Hth/Comm(\GK)$ having a rigid cusp (see \cite[Prop 9.1]{NR1}). 

In arguments that follow, we will need to exploit facts about many of the smallest volume cusped orbifolds, all of which have rigid cusps. 
This accounting of small volume orbifolds heavily relies on Meyerhoff's result that the densest horoball packing has a cusp density of $\frac{\sqrt{3}}{2 v_0}$, where \emph{cusp density} is defined to be cusp volume divided by total volume and $v_0 \approx 1.0149416$ is the volume the regular ideal tetrahedron (see \cite{Meyerhoff1985}).
 The following theorem summarizes Meyerhoff's result and  Adams' classification of small cusp volume hyperbolic orbifolds  
(see \cite[Thm 3.2, Cor 4.1, Thm 5.2]{Adams1}).

\begin{thm}[Adams, 1991]\label{thm:AdamsCusps}

Let $Q$ be a 1-cusped hyperbolic 3-orbifold.
\begin{enumerate}
\item A maximal $S^2(2,3,6)$ cusp in $Q$ has volume either $\frac{\sqrt{3}}{24}$, $\frac{\sqrt{3}}{12}$, $\frac{1}{8}$,
$\frac{\sqrt{3}(3+\sqrt{5})}{48}$, $\frac{\sqrt{21}}{24}$ or at least $\frac{\sqrt{3}}{8}$.
\item A maximal $S^2(3,3,3)$ cusp in $Q$ has volume either $\frac{\sqrt{3}}{12}$, $\frac{\sqrt{3}}{6}$, $\frac{1}{4}$,
$\frac{\sqrt{3}(3+\sqrt{5})}{24}$, $\frac{\sqrt{21}}{12}$ or at least $\frac{\sqrt{3}}{4}$.
\item A maximal $S^2(2,4,4)$ cusp in $Q$ has volume either $\frac{1}{8}$, $\frac{\sqrt{2}}{8}$, or at least $\frac{1}{4}$.
\end{enumerate}
\end{thm}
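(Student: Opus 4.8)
The plan is to translate the statement into a question about the Euclidean geometry of the maximal cusp and then combine a horoball-packing bound with a rigidity analysis. Work in the upper half-space model with the cusp of $Q$ at $\infty$, let $\Gamma=\GQ$ and let $\Gamma_\infty = P_Q$ be the peripheral subgroup, and normalize so that the maximal embedded horoball about $\infty$ is $B=\{h\ge 1\}$. Integrating the volume form $h^{-3}\,dx\,dy\,dh$ over $(\mathbb{C}\times[1,\infty))/\Gamma_\infty$ shows the cusp volume equals $\tfrac12\operatorname{Area}(\mathbb{C}/\Gamma_\infty)$, where $\Gamma_\infty$ acts on the horosphere $\mathbb{C}$ as a Euclidean crystallographic group. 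Since the cusp is rigid, the point group of $\Gamma_\infty$ is cyclic of order $n=6$, $3$, or $4$ according as the cross-section is $S^2(2,3,6)$, $S^2(3,3,3)$, or $S^2(2,4,4)$, so the translation lattice $T\le\Gamma_\infty$ is hexagonal when $n\in\{3,6\}$ and square when $n=4$. As $[\Gamma_\infty:T]=n$, the cusp volume is exactly $\operatorname{covol}(T)/(2n)$, and the theorem becomes a statement about which covolumes — equivalently, which shortest translation lengths $\ell$ — can occur.

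First I would extract the lower bounds. Maximality of the cusp forces $B$ to be tangent to some $\Gamma$-translate of itself, which after the normalization is a horoball $B_0$ of Euclidean diameter $1$ based at a point of $\mathbb{C}$; translate so that point is $0$. All $\Gamma$-translates of $B_0$ are pairwise disjoint diameter-$1$ horoballs tangent to $B$, so their base points form a $\Gamma$-invariant — hence $\Gamma_\infty$-invariant, hence $T$-invariant — packing of $\mathbb{C}$ by disjoint radius-$\tfrac12$ disks. In particular $t\cdot B_0$ is disjoint from $B_0$ for every nontrivial $t\in T$, so $\ell\ge 1$, giving $\operatorname{covol}(T)\ge\tfrac{\sqrt3}{2}$ (hexagonal) or $\operatorname{covol}(T)\ge 1$ (square); dividing by $2n$ yields cusp volume $\ge\tfrac{\sqrt3}{24}$, $\tfrac{\sqrt3}{12}$, $\tfrac18$ in the three cases. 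These are the smallest entries on each list and are attained by the minimal-lattice examples, so it remains to control the intermediate range.

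The heart of the argument is showing that below the stated threshold ($\tfrac{\sqrt3}{8}$, $\tfrac{\sqrt3}{4}$, $\tfrac14$) only finitely many values survive. I would set this up as a finite case analysis of how the diameter-$1$ horoballs can be positioned relative to $B$, to the lattice $T$, and to the axes of the cone-point rotations of $\Gamma_\infty$. The constraints to exploit are: (i) each cone-point rotation (of order up to $6$) permutes the diameter-$1$ horoballs, and any such horoball not centered on a given cone axis must be disjoint from all its images under the corresponding rotation, which severely limits where horoball centers can sit in a fundamental domain; and (ii) viewing the same horoball picture from the cusp at $0$ must again exhibit a maximal cusp of the same rigid type, a self-consistency condition that rigidifies the possible lattices. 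Carrying this out should force $T$ into one of a short list of explicit shapes — in the $S^2(2,3,6)$ case the hexagonal lattices with $\ell^2\in\{1,2,\sqrt3,\varphi^2,\sqrt7\}$, $\varphi=\tfrac{1+\sqrt5}{2}$, which reproduce the listed volumes $\tfrac{\sqrt3}{24},\tfrac{\sqrt3}{12},\tfrac18,\tfrac{\sqrt3(3+\sqrt5)}{48},\tfrac{\sqrt{21}}{24}$, and analogously in the other two cases — with each surviving lattice realized by a specific, typically arithmetic, orbifold recognizable from its invariant trace field $k\Gamma$. Meyerhoff's cusp-density bound $\tfrac{\sqrt3}{2v_0}$ enters here to keep the accompanying list of small-volume orbifolds finite, so the realizability half of the analysis terminates.

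I expect the main obstacle to be exactly that last step: ruling out every configuration whose cusp volume lies strictly between the largest sporadic value and the threshold. This demands a genuinely exhaustive (if elementary) enumeration — bounding how tightly diameter-$1$ horoballs can cluster around $B$ under the combined effect of the cyclic point-group symmetry and the order-$\ge 3$ cone-axis rotations, and then pinning down the finitely many extremal lattices precisely. It is at this point that the quadratic irrationalities $\sqrt3$, $\varphi$, $\sqrt7$ (and $\sqrt2$ in the $S^2(2,4,4)$ case) appear, forcing the groups into recognizable commensurability classes and letting one verify that nothing lies in between.
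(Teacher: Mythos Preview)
The paper does not prove this theorem: it is quoted verbatim as a result of Adams (the sentence immediately before the statement reads ``The following theorem summarizes Meyerhoff's result and Adams' classification of small cusp volume hyperbolic orbifolds (see \cite[Thm 3.2, Cor 4.1, Thm 5.2]{Adams1})''), and no argument is supplied. So there is no in-paper proof to compare your proposal against.

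As an outline of Adams' original approach your setup is accurate---normalize the maximal cusp to $\{h\ge 1\}$, identify the cusp volume with $\operatorname{covol}(T)/(2n)$ for the rigid point group of order $n$, and obtain the bottom entry on each list from $\ell\ge 1$ via disjointness of the diameter-$1$ horoballs. But you correctly flag that the content of the theorem is the enumeration in the gap below the threshold, and your proposal does not actually perform it. That step is not a matter of bounding the diameter-$1$ horoballs alone: Adams' analysis tracks the \emph{next-largest} horoballs as well (for instance, in the $\frac{\sqrt{21}}{24}$ configuration one finds horoballs of Euclidean diameter $\tfrac{1}{\sqrt 7}$ and $\tfrac{3}{7}$, as the present paper recalls in the proof of Lemma~\ref{nosqrt21over12}), and the sporadic values $\sqrt{21}$ and $3+\sqrt 5$ come from explicit distance relations among these secondary horoballs rather than from constraints on $T$ by itself. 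Your ``self-consistency'' idea (viewing the packing from the cusp at $0$) is exactly the right lever, but until the case analysis is written out you have a plan rather than a proof.
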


Adams points out that for each a cusp volume explicitly listed in 1), there is a unique orbifold $Q$  with a $S^2(2,3,6)$ cusp. Moreover, each of these orbifolds has a unique double cover with a $S^2(3,3,3)$ cusp corresponding to an orbifold with cusp volume explicitly listed in 2).

Neumann and Reid provided detailed descriptions of many of the orbifolds corresponding to the cusp volumes in these theorems (see \cite{NR2}).
A number of the orbifolds they describe are arithmetic. More specifically, their notes together with some notes by Adams on the volume of the orbifolds
can be summarized in the following proposition (see \cite{Adams1}, \cite{NR2}):

\begin{prop}[Adams 1991, Neumann and Reid 1991]\label{prop:AdamsAndNR2}
Let $Q$ be a 1-cusped hyperbolic 3-orbifold.

\begin{enumerate}

\item If $Q$ has a maximal $S^2(2,3,6)$ cusp of volume either $\frac{\sqrt{3}}{24}$, $\frac{\sqrt{3}}{12}$, or $\frac{1}{8}$,
it is arithmetic. Furthermore, these orbifolds have volumes $\frac{v_0}{12}$, $\frac{v_0}{6}$, and $\frac{5v_0}{24}$, respectively and no
other orbifold with this cusp type is of lower volume.
\item If $Q$ has a maximal $S^2(3,3,3)$ cusp of volume either $\frac{\sqrt{3}}{12}$, $\frac{\sqrt{3}}{6}$, or $\frac{1}{4}$,
it is arithmetic. Furthermore, these orbifolds have volumes $\frac{v_0}{6}$, $\frac{v_0}{3}$, and $\frac{5v_0}{12}$, respectively and no
other orbifold with this cusp type is of lower volume. 
\item If $Q$ has a maximal $S^2(2,4,4)$ cusp of volume either $\frac{1}{8}$ or $\frac{\sqrt{2}}{8}$, 
it is arithmetic. Furthermore, these orbifolds have volumes $\frac{v_1}{6}$ and $\frac{v_1}{4}$, respectively and no
other orbifold with this cusp type is of lower volume.
\end{enumerate}
\end{prop}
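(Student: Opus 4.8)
The plan is to read this statement off the explicit descriptions in \cite{Adams1} and \cite{NR2}, assembling for each cusp type three ingredients: an explicit identification of the orbifold, a verification that it is arithmetic, and a volume computation, together with a minimality argument. By the remark following Theorem~\ref{thm:AdamsCusps} — each $S^2(2,3,6)$ orbifold in the explicit list has a unique double cover with an $S^2(3,3,3)$ cusp, which doubles both the cusp volume and the total volume — part (2) follows formally from part (1), so it suffices to treat the $S^2(2,3,6)$ and $S^2(2,4,4)$ cases. Thus the bulk of the work is (i) to name the five or six orbifolds involved, (ii) to check that each is arithmetic, and (iii) to compute their volumes and confirm that no orbifold with the same cusp type has smaller volume.

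For (i) and (ii): a maximal rigid cusp of type $S^2(2,3,6)$ or $S^2(3,3,3)$ forces the Euclidean translation lattice of the cusp to be hexagonal with an order-three (resp. order-six) rotational symmetry, which pins down the cusp field and, in the smallest cases, forces the invariant trace field $k\Gamma_Q$ to be $\mathbb{Q}(\sqrt{-3})$; similarly an $S^2(2,4,4)$ cusp forces the square lattice and the field $\mathbb{Q}(i)$ (with $\mathbb{Q}(\sqrt{-2})$ entering the $\frac{\sqrt2}{8}$ case). In each listed case one identifies $\GQ$ with a specific finite-index over/under group of the relevant Bianchi group $\mathrm{PSL}(2,\mathcal{O}_d)$ with $d\in\{1,2,3\}$ — e.g. $\mathrm{PGL}(2,\mathcal{O}_3)$ and $\mathrm{PSL}(2,\mathcal{O}_3)$ for the two smallest orbifolds with $S^2(2,3,6)$ and $S^2(3,3,3)$ cusps respectively — as is carried out in \cite{NR2}. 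Arithmeticity is then immediate, being commensurable with a Bianchi group; alternatively one checks directly that $\GQ$ has integral traces, that $k\GQ$ is one of the imaginary quadratic fields above, and that the associated invariant quaternion algebra is $M_2$ of that field, so the arithmeticity criterion applies.

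For (iii): once the orbifold is pinned down as a quotient in the commensurability class of $\mathrm{PSL}(2,\mathcal{O}_d)$, its volume is a known rational multiple of $\mathrm{vol}(\Hth/\mathrm{PSL}(2,\mathcal{O}_d))$, which Humbert's volume formula evaluates — for $d=3$ this is $\frac{v_0}{6}$, giving the stated multiples of $v_0$ in (1) and (2), and for $d=1$ the analogous constant (namely $v_1$, the volume of the regular ideal octahedron) gives the multiples of $v_1$ in (3). The remaining, and genuinely substantive, point is \emph{minimality}: that no orbifold of the given cusp type has smaller volume than the listed arithmetic ones. Here one invokes Meyerhoff's cusp-density bound \cite{Meyerhoff1985}, so that an orbifold with a cusp of volume $V$ has total volume at least $\frac{2v_0}{\sqrt{3}}\,V$; feeding in the next-largest admissible cusp volumes from Theorem~\ref{thm:AdamsCusps} for each cusp type forces any orbifold not on the explicit list to have volume exceeding the stated values, while Adams' uniqueness-per-cusp-volume statement rules out other orbifolds realizing the three small cusp volumes themselves. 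The hard part is exactly this last bookkeeping together with confirming that the explicit orbifolds are pairwise distinct and correctly ordered — but since that is precisely the content of Adams' horoball-packing classification, the write-up should cite \cite{Adams1} and \cite{NR2} and reproduce only the relevant volume estimates rather than re-derive the packing arguments.
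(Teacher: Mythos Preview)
Your approach matches the paper's: this proposition is not proved in the paper at all but is stated as a summary of results from \cite{Adams1} and \cite{NR2}, and your sketch is a reasonable expansion of what those references contain (uniqueness and cusp-volume classification from Adams, explicit identification with Bianchi-type groups and arithmeticity from Neumann--Reid, minimality via Meyerhoff's density bound).

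Two small factual slips to fix. First, $v_1\approx 0.91596$ is defined in the paper as the volume of the ideal tetrahedron with dihedral angles $\tfrac{\pi}{2},\tfrac{\pi}{4},\tfrac{\pi}{4}$ (i.e.\ Catalan's constant), not the volume of the regular ideal octahedron, which is $4v_1$. Second, the $S^2(2,4,4)$ orbifold with cusp volume $\tfrac{\sqrt{2}}{8}$ is still commensurable with $\mathrm{PSL}(2,\mathcal{O}_1)$, not with a $d=2$ Bianchi group: an order-$4$ rotation on the cusp forces $i\in k\Gamma_Q$, and the stated volume $\tfrac{v_1}{4}$ is a rational multiple of $\mathrm{vol}(\Hth/\mathrm{PSL}(2,\mathcal{O}_1))$, so drop the reference to $\mathbb{Q}(\sqrt{-2})$.
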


We use the standard notation that $v_1 \approx .91596244$ is the volume of the ideal tetrahedron in $\Hth$ with dihedral angles of $\frac{\pi}{2}$, 
$\frac{\pi}{4}$, and $\frac{\pi}{4}$.

Additionally, the orbifold with a $S^2(2,3,6)$ cusp and cusp volume $\frac{\sqrt{3}(3+\sqrt{5})}{48}$ is the tetrahedral orbifold $\Hth/\Gamma(5,2,2;2,3,6)$ with volume approximately 0.343003 (see \cite[$\S$9]{NR1} and \cite[p. 144]{MR} for background). As noted by Adams, this orbifold has a unique 2-fold cover with a $S^2(3,3,3)$ cusp and cusp volume $\frac{\sqrt{3}(3+\sqrt{5})}{24}$. This orbifold is $\Hth/\Gamma(5,2,2;3,3,3)$  (see \cite{Adams1}).

We delay discussion of orbifolds of other relevant cusp volumes until  $\S$\ref{sect:Thm2Proof}.

\subsection{The isotropy graph and finite subgroups of $SO(3,\mathbb{R})$ }\label{sect:SubGroupsofSOn}

This subsection draws upon Thurston's definition of a (geometric) 3-orbifold. For further background, we refer the reader to
\cite[Chapter 13]{Thurston1} and \cite{Walsh2011}.

We define the \emph{base space} of an orbifold $Q$ to be the underlying topological space. For convenience, we use $|Q|$ to denote the base space of an orbifold $Q$.
In dimensions 2 and 3, $|\mathbb{R}^2/\langle \gamma | \gamma^n \rangle|=\mathbb{R}^2$ and $|\Rth/G|=\Rth$. Hence, the base space of a
2-orbifold is a surface and the base space of a 3-orbifold is a 3-manifold. 
Also, if all neighborhoods of $x\in Q$ map to $\mathbb{R}^2/\langle \gamma | \gamma^n \rangle$ or $\Rth/G$, we call 
x a \emph{cone point} of $Q$.

An \emph{elliptic} 2-orbifold is an orientable 2-orbifold that can be covered by $\mathbb{S}^2$. 
The complete list of orientable 2-orbifolds covered by $\mathbb{S}^2$ is $\mathbb{S}^2$, $\mathbb{S}^2(n,n)$, $\mathbb{S}^2(2,2,n)$, $\mathbb{S}^2(2,3,3)$, $\mathbb{S}^2(2,3,4)$ and
$\mathbb{S}^2(2,3,5)$. 

Taking the cone over each of these orbifolds produces all of the possibilities for $\Rth/G$ (see Fig \ref{fig:FiveTypesOfIntCones}). In particular, $G$ is either trivial, finite cyclic, 
$D_n$ (a dihedral group of order $2n$),
$A_4$, $S_4$, or $A_5$. Finally, if $x\in \Hth/\Gamma$ the \emph{isotropy group} of $x$ is $G\subset\Gamma$ such
$g\in G$ if and only if $g(x)=x$. The set of all points fixed by some element in a fundamental domain for $Q$ is a trivalent graph, which we will refer
to as the \emph{isotropy graph}. We note that this graph need not be connected.  However, we may consider an orbifold as a base space together with an embedded isotropy graph.

\begin{figure}
\begin{center}
\resizebox{4.5 in}{!}{\input{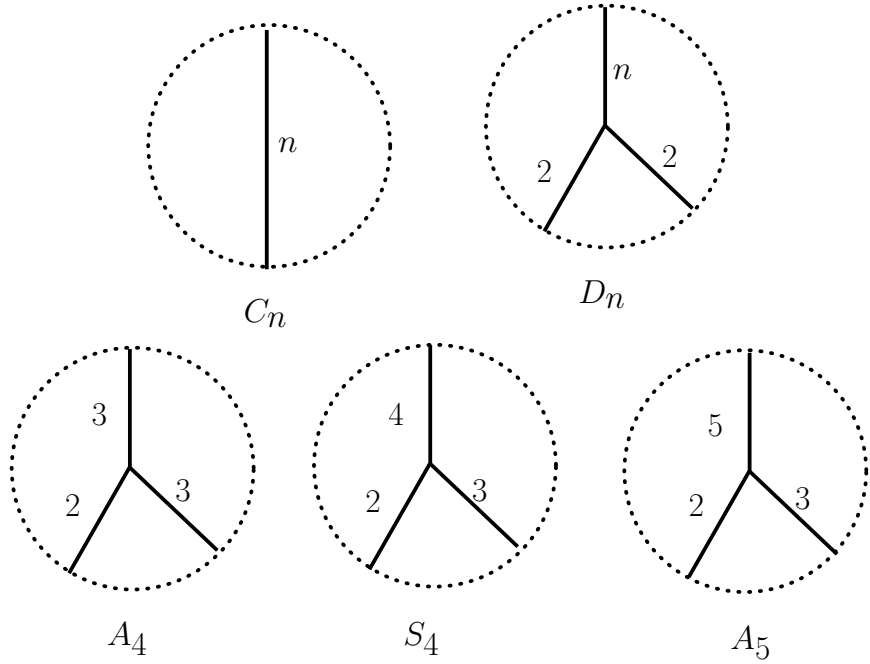}}
\caption{\label{fig:FiveTypesOfIntCones} The five types of trivalent points that correspond to finite subgroups of $\SOTR$}
\end{center}
\end{figure} 
 
The specific embedding of the isotropy graph will be useful in the arguments of this paper. We will pay particular interest as to the isometric embeddings of the vertices of the isotropy graph. Using the spherical cosine law (see \cite[Lem 3]{Roeder2006} for an application in this context), 
 we can compute the angles between the axes for each type of isotropy group in Figure \ref{fig:FiveTypesOfIntCones}. If we consider
 the three axes fixed by elements of torsion of orders $a$, $b$, and $c$  where the angles between these axes are $\alpha$, $\beta$, $\gamma$
 (see in Fig \ref{fig:SO3Angles}), then

\begin{center} 
$\begin{array}{c}
\cos \alpha = \frac{\cos \frac{\pi}{a} + \cos \frac{\pi}{b} \cos \frac{\pi}{c}  }{\sin \frac{\pi}{b} \sin\frac{\pi}{c} }\\
 
 \\
\cos \beta = \frac{\cos \frac{\pi}{b} + \cos \frac{\pi}{a} \cos \frac{\pi}{c}  }{\sin \frac{\pi}{a}\sin\frac{\pi}{c}}\\
\\
 
\cos \gamma = \frac{\cos \frac{\pi}{c} + \cos \frac{\pi}{a} \cos \frac{\pi}{b}  }{\sin \frac{\pi}{a}\sin\frac{\pi}{b}}.
\end{array}$
 \end{center}
 
 In particular, if $(a,b,c)$ is $(2,2,n)$, then $(\alpha,\beta,\gamma)$ is $(\frac{\pi}{2},\frac{\pi}{2},\frac{\pi}{n})$.
If $(a,b,c)=(2,3,3)$, then $(\alpha,\beta,\gamma)$ is $(\cos^{-1}(\frac{1}{3}),\cos^{-1}(\frac{1}{\sqrt{3}}),\cos^{-1}(\frac{1}{\sqrt{3}}))$.
If $(a,b,c)=(2,3,4)$, then $(\alpha,\beta,\gamma)=(\cos^{-1}(\frac{1}{\sqrt{3}}),\frac{\pi}{4},\cos^{-1}(\frac{\sqrt{2}}{\sqrt{3}}))$.
If $(a,b,c)=(2,3,5)$, then $(\alpha,\beta,\gamma)=(\cos^{-1}(\frac{\cos(\frac{\pi}{5})}{\sqrt{3}\sin(\frac{\pi}{5})}),\cos^{-1}(\frac{1}{2\sin(\frac{\pi}{5})}),\cos^{-1}(\frac{2\cos(\frac{\pi}{5})}{\sqrt{3}}))$.
We note that only the dihedral isotropy groups have the property that there is an axis perpendicular to all other fixed point axes. 

\begin{figure}
\begin{center}
\resizebox{1.5 in}{!}{\input{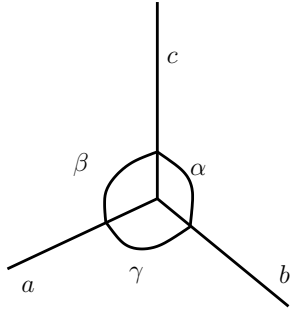}}
\caption{\label{fig:SO3Angles} The angles between axes of fixed points in finite subgroups of $\SOTR$}
\end{center}
\end{figure}

\subsection{The cusp killing homomorphism}

Let $Q=\Hth/\Gamma_Q$ be a 1-cusped hyperbolic 3-orbifold.
Denote by $|\gamma|$ the order of an element in $\Gamma_Q$ and denote by
 $R = \{  \gamma | \gamma \in P_Q, |\gamma|<\infty\}$. 
 We define the 
 \emph{cusp killing homomorphism} to be
  $$f\co \Gamma_Q \rightarrow \Gamma_Q/\langle\langle R \rangle\rangle_{\Gamma_Q}.$$

We will make use of the following proposition, which was also noticed by M. Kapovich. 

\begin{prop}\label{cusp_kill}
Let $\Sth-K$ be a hyperbolic knot complement. 
Suppose $\Sth-K$ covers an orientable orbifold $Q$ with a non-torus
cusp.  Denote the cusp killing homomorphism by $f$.  
Then, $f(\Gamma_Q)$ is trivial. Furthermore, $|Q|\cong D^3$ and each component of
the isotropy graph of Q is connected to the cusp.
\end{prop}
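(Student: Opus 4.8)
The plan is to extract three facts about the covering $\Sth-K\to Q$ and combine them. Since $Q$ has a non-torus cusp whose cross-section is covered by the torus cross-section of the cusp of $\Sth-K$, that cross-section is one of $S^2(2,2,2,2)$, $S^2(2,4,4)$, $S^2(2,3,6)$, $S^2(3,3,3)$, and in each of these Euclidean $2$-orbifold groups the rotations (equivalently, all the finite order elements) generate the whole group; hence $\langle R\rangle=P_Q$ and $\ker f=\langle\langle R\rangle\rangle_{\GQ}=\langle\langle P_Q\rangle\rangle_{\GQ}$. Next, $\Sth-K$ has a single cusp, so $Q$ does too, and the cusps of $\Sth-K$ lying over it are in bijection with the double cosets $\GK\backslash\GQ/P_Q$; there being exactly one, $\GQ=\GK\,P_Q$. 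Finally, filling $\Sth-K$ along a meridian $\mu$ of $K$ produces $\Sth$, so $\GK=\langle\langle\mu\rangle\rangle_{\GK}$. Now $\mu\in P_K\subseteq P_Q\subseteq\ker f$ and $\ker f\trianglelefteq\GQ$, so $\GK=\langle\langle\mu\rangle\rangle_{\GK}\subseteq\langle\langle\mu\rangle\rangle_{\GQ}\subseteq\ker f$; combined with $P_Q\subseteq\ker f$ this forces $\GQ=\GK\,P_Q\subseteq\ker f$, i.e. $f(\GQ)$ is trivial.

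To get $|Q|\cong D^3$, I would truncate the cusp to obtain a compact orbifold $Q_0$ with $\pio(Q_0)=\GQ$ and with $\partial|Q_0|$ a single $2$-sphere (the underlying space of the cusp cross-section). Writing $\Sigma$ for the isotropy graph of $Q$, the peripheral subgroup $P_Q$ is generated by meridians of the edges of $\Sigma$ running out the cusp, so $\langle\langle P_Q\rangle\rangle_{\GQ}$ is contained in the normal closure of all meridians of $\Sigma$; quotienting $\pio(Q_0)$ by that larger normal subgroup returns $\pi_1(|Q_0|)$, so $\pi_1(|Q_0|)$ is a quotient of $\GQ/\langle\langle P_Q\rangle\rangle_{\GQ}$, which is trivial by the previous paragraph. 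Thus $|Q_0|$ is a compact simply connected $3$-manifold with $2$-sphere boundary; capping the boundary with a ball and invoking the Poincar\'e conjecture gives $|Q_0|\cong D^3$.

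For the last claim, suppose toward a contradiction that some component $\Sigma_0$ of $\Sigma$ is disjoint from the cusp. Then $\Sigma_0$ is a compact graph in the interior of $|Q|$, so choose a ball $B$ with $\Sigma_0\subseteq\mathrm{int}\,B$, with $\partial B$ a $2$-sphere disjoint from $\Sigma$, and with $B$ disjoint from the cusp and from $\Sigma\smallsetminus\Sigma_0$. Let $B_0$ be the corresponding ball orbifold (underlying space $B$, singular locus $\Sigma_0$, non-singular spherical boundary) and $Q'=\overline{Q\smallsetminus B_0}$; by the orbifold van Kampen theorem $\GQ=\pio(Q')*\pio(B_0)$, the amalgamation being trivial because the splitting sphere carries no singular points. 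The cusp lies in $Q'$, so a conjugate of $P_Q$ lies in the factor $\pio(Q')$, whence $\GQ=\langle\langle P_Q\rangle\rangle_{\GQ}\subseteq\langle\langle\pio(Q')\rangle\rangle_{\GQ}$; since the normal closure of the first factor of a free product $A*B$ exhausts $A*B$ only when $B$ is trivial, $\pio(B_0)=1$. Now restrict the orbifold cover $\Sth-K\to Q$ over $B_0$: each component of the preimage is a connected orbifold cover of $B_0$, hence isomorphic to $B_0$ because $\pio(B_0)=1$, yet it is also an open subset of the manifold $\Sth-K$; so $B_0$ is a manifold, forcing $\Sigma_0=\varnothing$ — a contradiction.

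The main obstacle is the bookkeeping in the last paragraph: one must verify that excising the ball really places (a conjugate of) $P_Q$ inside the $\pio(Q')$ factor, that the orbifold van Kampen theorem applies with the trivial amalgamation claimed, and that a connected orbifold cover of a simply connected orbifold is an isomorphism even without knowing $B_0$ is a good orbifold. A secondary point is the appeal to the Poincar\'e conjecture when identifying $|Q|$ with $D^3$; one might instead try to read this off the branched cover $\Sth\to|Q|$ coming from a meridional filling, but the simply-connected-implies-$S^3$ input seems hard to avoid.
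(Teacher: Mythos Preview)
Your proof is correct and follows essentially the same approach as the paper: both use $\GQ=\GK\cdot P_Q$, the fact that a meridian normally generates $\GK$, and the Poincar\'e conjecture for $|Q|\cong D^3$. For the last claim the paper gives only a one-line assertion that a disconnected component would produce torsion surviving the cusp-killing map, whereas you spell out the underlying free-product/van Kampen argument explicitly; your self-flagged worries are unnecessary, since the restriction of the manifold cover $\Sth-K\to Q$ already shows $B_0$ is good, after which $\pio(B_0)=1$ forces $B_0$ to be a manifold.
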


\begin{proof}
 First note that $\Gamma_Q = P_Q \cdot \GK$. 

Since a meridian $\mu$ of $\GK$ is contained in $P_Q$ and $P_Q$ is generated by torsion elements on the cusp (we recall
$\S$\ref{sect:rigidCusps})
killing these torsion elements kills $\langle \langle \mu \rangle \rangle_{\Gamma_Q}$ as well as killing $P_Q$. However, 
$\GK =  \langle \langle \mu \rangle \rangle_{\GK}$ and $ \langle \langle \mu \rangle \rangle_{\GK} \subset  \langle \langle \mu \rangle \rangle_{\Gamma_Q}$.
Hence, the cusp killing homomorphism kills the whole group $\Gamma_Q$.

Thus, $|Q|$ is a simply connected space with $S^2$ boundary. Therefore, $|Q|\cong D^3$ by the solution to the Poincar\'{e} Conjecture 
(see \cite{MT}). 

If there were any pieces of the isotropy graph not connected to the cusp, 
then there would be elements of finite order that are non-trivial under the cusp killing homomorphism. Hence, the isotropy graph is connected.
\end{proof}

\begin{figure}
\centering
\subfigure[ The isotropy graph before reduction
]{
\resizebox{6 cm}{!}{\input{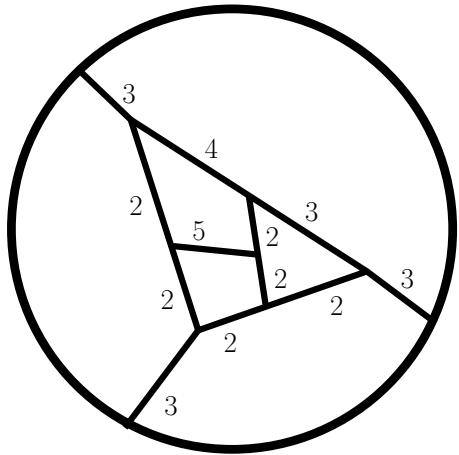}}
\label{fig:GraphCuspKillStep1}
}
\subfigure[The graph after removing edges corresponding to torsion elements fixing points on the cusp]{
 \resizebox{6 cm}{!}{\input{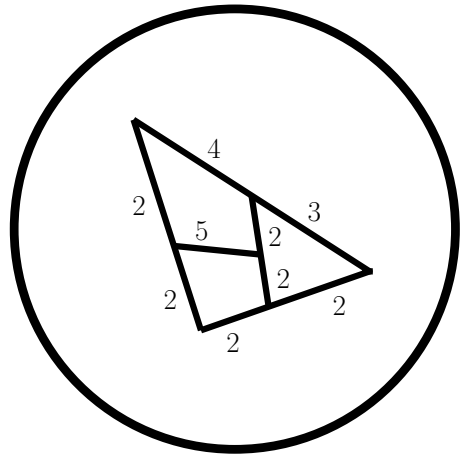}}  
}
\subfigure[The graph after resolving the degree 2 vertices
]{
\resizebox{6cm}{!}{\input{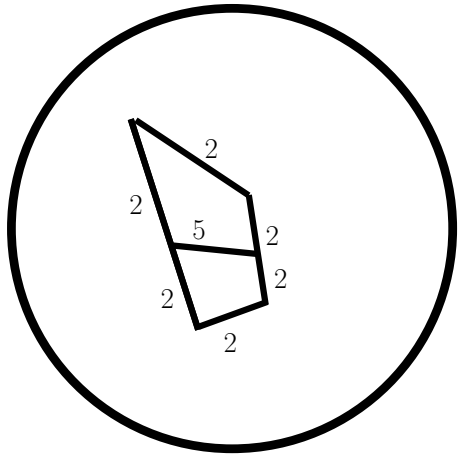}} 
\label{fig:GraphCuspKillStep3}
}
\subfigure[The result of cusp killing is a graph corresponding to a dihedral group]{
\resizebox{6 cm}{!}{\input{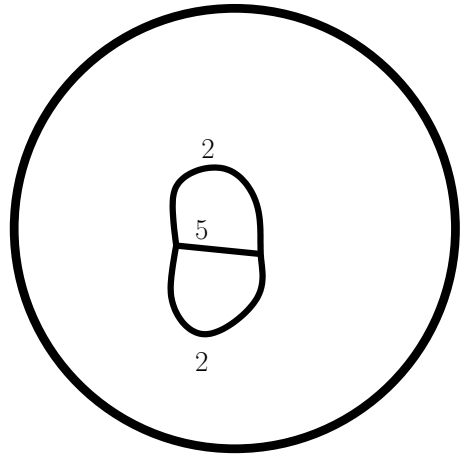}}
\label{fig:GraphCuspKillStep4}
}

\caption{\label{fig:GraphCuspKill} A step by step graphical interpretation of the cusp killing homomorphism}
\end{figure}

\begin{rem}
We can also interpret the effects of the cusp killing homomorphism on the 
isotropy graph of $Q$ when $|Q|$ is simply connected. Viewing the isotopy graph as a weighted graph that generates the fundamental group of 
$Q$ via the Wirtinger presentation, 
killing elements of torsion on the cusp corresponds to erasing edges of the graph.
For each endpoint $x$ of an erased edge corresponding an elliptic element $\gamma$, we introduce the relation $\gamma=1$ in the local isotropy
group at $x$. If $x$ corresponds to a $S^2(2,2,2,2)$ cusp, then the new isotropy group at $x$ is a quotient of the Klein 4 group. If not, then
$x$ corresponds to a trivalent vertex of the isotropy graph, say each edge corresponds to torsion elements $\gamma$, $a$, and $b$. Introducing the relation that $\gamma=1$, 
to $ab\gamma=1$ 
 yields $a=b^{-1}$. In particular $a$ and $b$ have the same order. Therefore, in the image $f(\Gamma_Q)$, $f(a)=f(b^{-1})$ and graphically
we can relabel the weights corresponding to $a$ and $b$ with $gcd(|a|,|b|)$ (see 
Fig \ref{fig:GraphCuspKill}). 
Relabeling the edges could introduce further reductions to the graph, however
since the isotropy graph has a finite number of vertices and edges and each edge is weighted by a finite integer, this process will terminate in a finite number of steps.
\end{rem}

\section{Integral representations of knot groups}\label{sect:knotReps}

In this section, we provide two lemmas about the representations of knot groups into $\PSLTA$.
Both main theorems rely on these two lemmas in order to limit the possible covers for $p\co\Sth-K\rightarrow Q$, where $\GK$ admits integral traces (see $\S$\ref{sect:background}) and $Q$ has a rigid cusp. 

The key tool from number theory used in the proofs of these lemmas is the Hilbert class field. For further background on the Hilbert class field (see \cite[$\S$10.2]{Koch2000}). This field is special in two ways. First, if $E$ is the Hilbert class field of $L$, then any ideal $I$ in $\mathcal{O}_L$ is principal in $\mathcal{O}_E$. The second fact that we exploit is that $E$ is an unramified extension of $L$. By this we mean that for each prime ideal $P$ of $\mathcal{O}_L$, $P$ either remains prime or $P$ splits into product of distinct prime ideals in $\mathcal{O}_E$, that is $P=\Pi_{i=1} ^m Q_i$ where each $Q_i$ is prime in $\mathcal{O}_E$ and $Q_i \ne Q_j$ for all $i \ne j$.  

\begin{lem}\label{lem:intTracesUpperTri}
Let $\GK$ be a knot group admitting integral traces. Then for any $Q$ such that $\Sth-K$ covers $Q$, $\GQ$  admits a representation into $\PSLTA$ where $\Gamma_K$ has an upper triangular meridian.
\end{lem}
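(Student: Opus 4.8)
The plan is to work inside $\PSLTA$, using the fact that $\GK$ has integral traces to embed the whole picture over the ring of algebraic integers, and then to conjugate so that the peripheral subgroup—and in particular a meridian—becomes upper triangular. First I would invoke the hypothesis: since $\GK$ has integral traces, $\GK$ is conjugate in $\PSLTC$ to a subgroup of $\PSLTA$. The subtlety is that $\GQ$ is a larger group than $\GK$ (we have $\Sth-K$ covering $Q$, so $\GK \subset \GQ$ with finite index), and a priori conjugating $\GK$ into $\PSLTA$ need not put $\GQ$ there as well; however, since $\GK$ is finite index in $\GQ$, the trace field and integrality properties are shared up to the relevant extension, and one checks that $\GQ$ also lands in $PSL(2,\mathbb{A})$ after the same conjugation—here the key point is that elements of $\GQ$ are, up to finite order, products of elements comparable to $\GK$, and finite-order elliptics automatically have integral (indeed algebraic) traces, so $\GQ$ has integral traces too and hence a $\PSLTA$-representation.

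Next I would arrange the meridian to be upper triangular. A meridian $\mu$ of $\GK$ is parabolic, fixing a single point on $\partial \Hth$; conjugating by an element of $\PSLTC$ we may send that fixed point to $\infty$, which makes $\mu$ upper triangular (in fact $\pm\begin{pmatrix}1 & * \\ 0 & 1\end{pmatrix}$). The issue is whether this conjugating element can be taken in $\PSLTA$—i.e., whether the fixed point of $\mu$ is an algebraic number and whether the conjugation preserves the property that all of $\GQ$ has entries in $\mathbb{A}$. The fixed point of a parabolic with algebraic-integer entries is the root of the associated quadratic (here linear) equation, hence algebraic; and conjugating a subgroup of $PSL(2,\mathbb{A})$ by a matrix over a finite extension of its entry field keeps everything over $\mathbb{A}$ (algebraic integers are closed under the field operations once we allow denominators, and $\mathbb{A}$ is the full ring of all algebraic integers, which is closed under such conjugation up to the usual caveat—but one should be careful, since conjugation can introduce denominators; this is where the Hilbert class field enters, to clear ideal-theoretic obstructions to finding the conjugating element with the right integrality). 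So I would: (1) pass to $PSL(2,E)$ where $E$ is the Hilbert class field of the relevant field $k\GQ$ (or a suitable finite extension), (2) use principality of ideals in $\mathcal{O}_E$ to choose a $GL(2,\mathcal{O}_E)$ (or $SL$) conjugator that simultaneously keeps $\GQ$ integral and sends $\mu$'s fixed point to $\infty$, and (3) observe $\mathcal{O}_E \subset \mathbb{A}$, so we land in $\PSLTA$ with $\mu$ upper triangular.

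The main obstacle I expect is step (2): controlling the integrality of the conjugating matrix. The naive conjugation that sends the parabolic fixed point to $\infty$ may require inverting an algebraic integer whose inverse is not integral, i.e. it may not be realizable by a matrix in $GL(2,\mathcal{O}_E)$ but only in $GL(2, E)$. The Hilbert class field is precisely the tool to fix this: the obstruction to writing the needed change of basis over $\mathcal{O}_L$ is measured by an ideal class, and over $\mathcal{O}_E$ that ideal becomes principal, so the conjugator can be chosen in $GL(2,\mathcal{O}_E)$; after rescaling into $SL$ we get an element of $PSL(2,\mathcal{O}_E) \subset \PSLTA$. Then I would double-check that a \emph{meridian} specifically (not merely some peripheral parabolic) is the one made upper triangular—but since any parabolic in $P_K$ fixing $\infty$ is upper triangular once $\infty$ is the parabolic fixed point of $P_K$, and $\mu \in P_K$, this is automatic. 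Finally I would note that the statement only claims a representation of $\GQ$ into $\PSLTA$ (not that the original $\GQ \subset \PSLTC$ lies there), so we are free to replace $\GQ$ by a conjugate, which is exactly what this argument produces.
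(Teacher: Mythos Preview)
Your proposal is correct and follows essentially the same approach as the paper: pass from integral traces of $\GK$ to integral traces of $\GQ$, invoke \cite[Lem~5.2.4]{MR} to get $\GQ \subset PSL(2,\mathcal{O}_L)$, then conjugate so the meridian's fixed point goes to $\infty$, using the Hilbert class field to kill the ideal-class obstruction to finding the conjugator in $SL(2,\mathcal{O}_E)$. The paper makes step~(2) fully explicit where you remain schematic: it writes the meridian as $\mu=\begin{pmatrix} 1+\beta\gamma & -\beta^2 \\ \gamma^2 & 1-\beta\gamma\end{pmatrix}$ (fixed point $\beta/\gamma$), observes that when $\langle\beta,\gamma\rangle=\mathcal{O}_L$ one can take the conjugator $h=\begin{pmatrix} p & q \\ -\gamma & \beta\end{pmatrix}\in SL(2,\mathcal{O}_L)$ with $p\beta+q\gamma=1$, and handles the non-comaximal case by factoring out a generator of the (possibly non-principal) ideal $\langle\beta,\gamma\rangle$ after passing to the Hilbert class field---exactly the mechanism you anticipated.
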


\begin{proof}
Since $\GK$ admits integral traces, $\GQ$ admits integral traces. Therefore, there is an integral representation of $\GQ$ into $PSL(2, \mathcal{O}_L)$ where $L$ is the Hilbert class field of the trace 
field of $\GQ$ (\cite[Lem 5.2.4]{MR}).  

Considering a Wirtinger presentation for $\GK\subset \GQ$, we may assume $\GK$ is generated by parabolics. Hence, 

$$\GK = \langle \mu_1, \mu_2, ... \mu_n \rangle \mbox{ where }\mu_i = \begin{pmatrix} 1+\beta_i\gamma_i & -\beta_i^2 \\ \gamma_i^2 & 1-\beta_i\gamma_i \end{pmatrix}.$$

We claim that we can conjugate $\GQ$ such that $\GQ$ remains in $\PSLTA$ with $\mu_1$ upper triangular. For ease of notation, we will suppress the subscript notation in the following
argument. Note that here $\mu$ fixes $\frac{\beta}{\gamma}$.  If $\beta \mathcal{O}_L$ and  $\gamma \mathcal{O}_L$
are comaximal, ie there exist $p,q \in \mathcal{O}_L$ such that $p\beta+q\gamma=1$, then 
conjugating $\GQ$ by $h =  \begin{pmatrix} p & q \\ -\gamma & \beta \end{pmatrix}$ yields an upper triangular meridian $h \mu h^{-1}$. 
Since $h$ has determinant 1, $h\GQ h^{-1} \subset \PSLTA$.
If $\beta \mathcal{O}_L$ and  $\gamma \mathcal{O}_L$ are not comaximal, then $I=\langle \beta, \gamma \rangle$ is a proper ideal. If $I$ is principal, then we may say that
$\beta = r \alpha$, $\gamma= s \alpha$ for some $\alpha\in\mathcal{O}_L$. Since $\mathcal{O}_L$ is a Dedekind domain, there exist $p,q \in \mathcal{O}_L$, $p\beta+q\gamma=\alpha$ and $pr+qs=1$. Therefore, conjugating $\GK$ by $h' =  \begin{pmatrix} p & q \\ -s & r \end{pmatrix}$
yields the desired representation. If $I$ is not a principal ideal in $\mathcal{O}_L$, then we may pass to the Hilbert class field of $L$, say $E$ and apply the same argument since
$I$ is a principal ideal in $\mathcal{O}_E$.
\end{proof}

The following lemma helps specify what values can go in the upper right entry of the meridian exhibited above. 

\begin{lem}\label{lem:xIsAUnit}
Let $\GK$ be a knot group in $\PSLTA$ with an upper triangular meridian $\mu=\begin{pmatrix} 1 & x \\ 0 & 1 \end{pmatrix}$. 
Then $x$ is a unit in $\mathbb{A}$. Furthermore, we may assume that $\mu=\begin{pmatrix} 1 & 1 \\ 0 & 1 \end{pmatrix}$. 
\end{lem}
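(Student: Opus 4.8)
The plan is to exploit the fact that a longitude $\lambda$ of the knot commutes with the meridian $\mu$ and, crucially, that $\lambda$ lies in the commutator subgroup of $\GK$ (since a knot group abelianizes to $\Z$, generated by the meridian). First I would observe that any element of $\PSLTA$ commuting with the upper-triangular parabolic $\mu=\begin{pmatrix}1&x\\0&1\end{pmatrix}$ is itself upper triangular and parabolic, so $\lambda=\begin{pmatrix}1&y\\0&1\end{pmatrix}$ for some $y\in\mathbb{A}$; the peripheral subgroup $P_K$ is then generated by these two parabolics with $\mu^k\lambda^\ell=\begin{pmatrix}1&kx+\ell y\\0&1\end{pmatrix}$. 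After conjugating by a diagonal matrix $\mathrm{diag}(x^{1/2},x^{-1/2})$ — which keeps us in $\PSLTA$ provided $x$ is a unit, but for now just rescales — one can normalize $\mu$ to have $1$ in the corner; so the real content is showing $x$ is a \emph{unit} in $\mathbb{A}$, i.e. that $x^{-1}$ is also an algebraic integer.

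The key step is a trace/abelianization argument. Because $\lambda$ lies in $[\GK,\GK]$, it is a product of commutators; since traces of commutators of matrices in $SL(2)$ over a ring live in that ring, and more to the point the entry $y$ of $\lambda=\mu^\lambda$ must be related to $x$ by the structure of $\GK$. The sharper route is this: the cusp shape $y/x$ is an algebraic number, but what we need is that $\lambda$, being in the commutator subgroup, forces $y\in x\,\mathbb{A}$ — no, rather one argues the reverse. Let me instead recall the standard fact (this is essentially the content of the lemma, due originally to work of Bass / used by Gonzalez-Acuna–Whitten and others): if $\GK$ has integral traces and is generated by parabolics, conjugate so $\mu$ is upper-triangular unipotent with corner $x$; then consider the $\mathbb{A}$-subalgebra generated by all matrix entries. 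The longitude $\lambda=\begin{pmatrix}1&y\\0&1\end{pmatrix}$ satisfies: for every Wirtinger generator $\mu_i=\begin{pmatrix}1+\beta_i\gamma_i&-\beta_i^2\\\gamma_i^2&1-\beta_i\gamma_i\end{pmatrix}$, the element $\mu_i\lambda\mu_i^{-1}\lambda^{-1}$ must equal $1$ up to the structure forcing things into $\mathbb{A}$ — I would compute $\mu\mu_i$ and use that $tr(\mu\mu_i)=2+x\gamma_i^2\in\mathbb{A}$, and similarly build relations showing both $x$ and $x^{-1}$ are integral. Concretely: conjugating the upper-triangular $\mu$ back, $\beta_1/\gamma_1$ is the fixed point $\infty$ so $\gamma_1=0$ in that normalization and $x=-\beta_1^2$; then integrality of traces of $\mu_1\mu_i$ gives $x\gamma_i^2\in\mathbb{A}$ for all $i$, while integrality applied after swapping the roles (normalizing some $\mu_j$ upper-triangular instead, or using that the $\gamma_i$ generate together with $x$ a ring containing $1$) yields $x^{-1}\in\mathbb{A}$.

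I expect the main obstacle to be making precise the claim that $x^{-1}$ is integral: showing $x$ is a unit is genuinely where one must use that $K$ is a knot — that $H_1(\Sth-K)=\Z$, equivalently that the Wirtinger generators $\mu_i$ are all conjugate in $\GK$ — rather than just a link or a general cusped manifold. I would argue that conjugacy of $\mu_1$ and $\mu_i$ means there is $g\in\GK\subset\PSLTA$ with $g\mu_1 g^{-1}=\mu_i$; writing $g=\begin{pmatrix}a&b\\c&d\end{pmatrix}$ with entries in $\mathbb{A}$ and $ad-bc=1$, comparing corners gives $x = -\beta_i^2 \cdot(\text{unit expression in }a,b,c,d)$ and inverting the relation gives the reciprocal statement, so $x$ and $x^{-1}$ both lie in $\mathbb{A}$. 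Once $x\in\mathbb{A}^\times$, conjugation by $\mathrm{diag}(x^{1/2},x^{-1/2})\in SL(2,\mathbb{A})$ (legitimate since $x^{1/2}\in\mathbb{A}$, being a root of a monic polynomial over $\mathbb{A}$) sends $\mu$ to $\begin{pmatrix}1&1\\0&1\end{pmatrix}$ while keeping $\GK$ inside $\PSLTA$, giving the "furthermore" clause. I would double-check that this diagonal conjugation does not disturb the hypotheses needed downstream (it doesn't — it's an $\mathbb{A}$-conjugation).
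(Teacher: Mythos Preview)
Your proposal has a genuine gap at the crucial step. You correctly identify that the special feature of a knot group needed here is that the meridian \emph{normally generates} $\GK$ (equivalently, $H_1=\Z$), but none of the arguments you sketch actually exploits this in a way that forces $x^{-1}\in\mathbb{A}$. For instance, in your conjugacy argument: if $g=\begin{pmatrix}a&b\\c&d\end{pmatrix}\in\GK$ conjugates $\mu=\mu_1$ to $\mu_i$, a direct computation gives $-\beta_i^2=a^2x$ and $\gamma_i^2=-c^2x$, so $x=-\beta_i^2/a^2$. There is no reason for $a$ (or $\beta_i$) to be a unit, so ``inverting the relation'' yields $x^{-1}=-a^2/\beta_i^2$, which is not visibly integral. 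The trace computations you mention ($tr(\mu\mu_i)=2+x\gamma_i^2$, etc.) only confirm that products like $x\gamma_i^2$ lie in $\mathbb{A}$, which is already known since all entries are integral; they never produce $x^{-1}$. And the longitude idea, while appealing, is left entirely unexplored.

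The paper's argument is quite different and uses normal generation in an essential way. Assuming $x$ is a non-unit, one takes a prime ideal $Q$ containing $x$ and reduces the representation modulo $Q$. The image of $\mu$ is then trivial in $PSL(2,\mathcal{O}_E/Q)$, and since $\mu$ normally generates $\GK$, the entire image of $\GK$ is trivial. In particular the lower-left entry $c$ of \emph{every} element of $\GK$ lies in $Q$; running over all primes dividing $x$ shows $c$ lies in the radical $J$ of $x\mathcal{O}_E$. Passing to the Hilbert class field makes $J=\alpha\mathcal{O}_E$ principal, and conjugating by $\mathrm{diag}(\sqrt\alpha,1/\sqrt\alpha)$ keeps the representation integral while replacing $x$ by $x\alpha$. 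Iterating gives an infinite descent on the minimal $Q$-valuation of lower-left entries, a contradiction. This reduction-mod-$P$ mechanism is the missing idea in your attempt; your ``furthermore'' clause (conjugation by $\mathrm{diag}(x^{-1/2},x^{1/2})$ once $x$ is known to be a unit) is fine and matches the paper.
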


\begin{proof}
Since $\GK$ is finitely generated, we can attach all of the entries of the generators of $\GK$ to $\Q$ and get a finite 
extension of $\Q$. Denote this extension by $L$.  Hence, $\GK \subset PSL(2,\mathcal{O}_L)$.

Assume $x$ is not a unit. Then $x\mathcal{O}_L$ is a proper ideal and we may say $x\mathcal{O}_L = \Pi_{i=1} ^n P_i ^{e_i}$ where each $P_i$ is a maximal ideal in $\mathcal{O}_L$. 
Let $J=\Pi_{i=1} ^n P_i$ for the same $P_i$.

Denote by $E$ the Hilbert class field of $L$. Then $J = \alpha\mathcal{O}_E$. Since $E$ is an unramified extension of L and each of the $P_i$ are maximal ideals in $\mathcal{O}_L$, $J = \Pi_{i=1} ^m Q_i$
where $Q_i \ne Q_j$ if $i \ne j$. 

Let $g=\begin{pmatrix} a & b \\  c & d   \end{pmatrix} \in \GK$ and denote by $h=\begin{pmatrix} \sqrt{\alpha} & 0 \\ 0 & \frac{1}{\sqrt{\alpha}} \end{pmatrix}$. 
Then, $$h\cdot g \cdot h^{-1}= \begin{pmatrix} a & b\cdot\alpha \\  \frac{c}{\alpha}  & d   \end{pmatrix}.$$

We claim $\frac{c}{\alpha}$ is in $\mathcal{O}_E$. To see this it suffices to show $c$ is in $J$. There exists a homomorphism $f_i \co PSL(2,\mathcal{O}_E) \rightarrow PSL(2,\mathcal{O}_E/Q_i)$ 
for each prime ideal $Q_i$. Under such a homomorphism $f_i(\GK)$ is trivial, since $f(\mu)$ is trivial and $\mu$ normally generates $\GK$. Therefore, 
$f_i(g)= \begin{pmatrix} 1 & 0 \\  0 & 1   \end{pmatrix}$  and
$c$ is in $Q_i$ for each $Q_i$. Hence, $c$ is in $J$.

Also, $h \cdot \mu \cdot h^{-1} = \begin{pmatrix} 1 & x\alpha \\ 0 & 1 \end{pmatrix}$. Note, that $x\alpha \mathcal{O}_E$ and $x \mathcal{O}_E$ factor into the same set of prime ideals,
if we ignore the multiplicities of the factors. Hence, the construction of $J$ is independent of starting with $\GK$ or $h\GK h^{-1}$.  
Therefore, by applying the above argument to $h \GK h^{-1}$, $\frac{c}{\alpha} \in J$ as well. 

Let $$V(g) = min\left\{ v_{Q_i}(c) | g=\begin{pmatrix} a & b \\ c & d \end{pmatrix} 
\right\}.$$ Here $v_{Q_i} (c)$ denotes the power of $Q_i$ in the factorization of the ideal generated by $c$.  
Notice that $V(h\cdot g \cdot h^{-1})=V(g)-1$. 

There is an element $g' \in \GK$ such that $V(g')=s$ is minimal. Therefore $V(h^s \cdot g' \cdot h^{-s})=0$, but $h^s \cdot \GK \cdot h^{-s}$ is trivial since 
$h^s \cdot \mu \cdot h^{-s} = \begin{pmatrix} 1 & x\alpha^s \\ 0 & 1 \end{pmatrix}$ normally generates $h^s \cdot \GK \cdot h^{-s}$. This is a contradiction.

Finally, since $x$ is a unit, we may conjugate $\GK$ by $h'=\begin{pmatrix} \frac{1}{\sqrt{x}} & 0 \\ 0 & \sqrt{x} \end{pmatrix}$ so that there is a meridian of the 
form $ \begin{pmatrix} 1 & 1 \\ 0 & 1 \end{pmatrix}$, while preserving the integrality of the representation.
\end{proof}

\begin{rem}\label{rem:betaGammaComaximal}
As a consequence of the argument above with $\GK$ in $PSL(2,\mathcal{O}_L)$, the meridian $\mu= \begin{pmatrix} 1+\beta\gamma & -\beta^2 \\ \gamma^2 & 1-\beta\gamma \end{pmatrix}$ must have the property 
that $\beta \mathcal{O}_L$ and  $\gamma \mathcal{O}_L$
are comaximal. Otherwise, after conjugation the upper triangulation meridian will not have a unit in the upper right entry.
\end{rem}

\section{Proof of Theorem \ref{mainthm1}}\label{sect:Thm1Proof}

In this section, we prove Theorem \ref{mainthm1}. In the proof, we assume that 
$\Sth-K$ cyclically covers an orbifold with a torus cusp, which we denote by $Q_T$ throughout the section.  The main argument is that $Q_T$ cannot admit multiple cyclic fillings because $Q_T$ must be a relatively high degree cover of a rigid cusped orbifold. 

We say the isotropy graph of a orbifold has a \emph{loop} if there is an edge in the isotropy graph that begins and ends at the same vertex. It is a consequence of the cusp killing homomorphism that loops must begin and end the cusp of orbifold covered by a knot complement. 

Also, we note that Propositions \ref{prop:abelianQuotients} and \ref{prop:nofourloop} are also used in the proof of Theorem \ref{mainthm2}. 

We begin by classifying the possible abelian quotients of rigid cusped orbifolds covered by hyperbolic knot complements.

\begin{prop}\label{prop:abelianQuotients}
Let $\Sth-K$ be a hyperbolic knot complement that covers an orbifold Q.
\begin{enumerate}
\item If Q has a $S^2(2,3,6)$ cusp, then $\Z/2\Z$ surjects $\GQ ^{ab}$.
\item If Q has a $S^2(3,3,3)$ cusp, then $\Z/3\Z \times \Z/3\Z$ surjects $\GQ ^{ab}$.
\item If Q has a $S^2(2,4,4)$ cusp, $\GQ ^{ab}$ is trivial, $\Z/2\Z$, $\Z/2\Z\times\Z/2\Z$, or $\Z/4\Z$.
Furthermore, $\GQ ^{ab} \cong \Z/4Z$ if and only if the isotropy graph of $Q$ has a loop.
\end{enumerate}
\end{prop}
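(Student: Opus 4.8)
The plan is to reduce the whole statement to Proposition~\ref{cusp_kill}. Let $\ell_1,\ell_2,\ell_3$ be the orders of the three cone points of the cusp cross-section of $Q$, and let $m_1,m_2,m_3\in P_Q$ be meridians of the three singular rays these contribute to the isotropy graph, so that $P_Q$ is the orbifold fundamental group of the corresponding Euclidean triangle orbifold, with presentation $\langle m_1,m_2,m_3\mid m_1^{\ell_1},m_2^{\ell_2},m_3^{\ell_3},\,m_1m_2m_3\rangle$. Since a meridian $\mu$ of $K$ lies in $P_Q$ and normally generates $\GK$, the argument in the proof of Proposition~\ref{cusp_kill} gives $\GQ=\langle\langle P_Q\rangle\rangle_{\GQ}$; hence the images of $m_1,m_2,m_3$ generate $\GQ^{ab}$, and $\GQ^{ab}$ is a quotient of the abelianization $A:=\langle m_1,m_2,m_3\mid \ell_1m_1,\ell_2m_2,\ell_3m_3,\,m_1+m_2+m_3\rangle$ of $P_Q$ (written additively). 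For $S^2(3,3,3)$ one has $A\cong\Z/3\Z\times\Z/3\Z$, so $\Z/3\Z\times\Z/3\Z$ surjects onto $\GQ^{ab}$ and part~(2) is immediate. For $S^2(2,3,6)$ we get $A\cong\Z/6\Z$, and for $S^2(2,4,4)$ we get $A\cong\Z/4\Z\times\Z/2\Z$; it remains to produce the extra relations imposed by the rest of the isotropy graph.

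For $S^2(2,3,6)$ I would argue from the order-$6$ edge. Since $S^2(2,3,6)$ has only one cone point of order $6$, that edge is not a loop, so by the fact recorded just before the proposition (loops occur only at the cusp) its far endpoint is a trivalent interior vertex. The only spherical triple containing a $6$ is $(2,2,6)$, whose local group is the dihedral group $D_6$ (see Figure~\ref{fig:FiveTypesOfIntCones}); the relation at that vertex exhibits $m_6$ as a product of two meridians of order $2$, so $2m_6=0$ in $\GQ^{ab}$. Together with $m_6=-m_2-m_3$ and $2m_2=0$ this forces $2m_3=0$, and hence $m_3=0$ since $3m_3=0$ as well. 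Thus $\GQ^{ab}$ is a quotient of $\Z/2\Z=\langle m_2\rangle$, so $\Z/2\Z$ surjects onto $\GQ^{ab}$, which is part~(1).

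For $S^2(2,4,4)$ I would split on the presence of a loop. Since $S^2(2,4,4)$ has a single cone point of order $2$, a loop must be an edge joining its two order-$4$ cone points. If such an edge is present then $m_4$ and $m_4'$ are conjugate, so $\GQ^{ab}$ is generated by $m_4$ alone ($m_2=-2m_4$), hence cyclic and a quotient of $\Z/4\Z$. If no loop is present, each order-$4$ singular ray runs to a trivalent interior vertex whose spherical triple contains a $4$, hence is $(2,2,4)$ (local group $D_4$) or $(2,3,4)$ (local group $S_4$). In the first case the vertex relation again writes $m_4$ as a product of two order-$2$ meridians, giving $2m_4=0$; in the second it writes $m_4$ as a product of an order-$2$ and an order-$3$ meridian, so $2m_4$ has order dividing $3$, and since it also has order dividing $4$ it is $0$. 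The same applies to $m_4'$, so $\GQ^{ab}$ is a quotient of $\Z/2\Z\times\Z/2\Z$. Combining the two cases gives $\GQ^{ab}\in\{0,\Z/2\Z,\Z/2\Z\times\Z/2\Z,\Z/4\Z\}$ and shows that $\GQ^{ab}\cong\Z/4\Z$ forces the existence of a loop.

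The step I expect to be the real obstacle is the other implication in part~(3): that a loop forces $\GQ^{ab}$ to be exactly $\Z/4\Z$ rather than a proper quotient. When the loop is present, the relation $m_2m_4m_4'=1$ and the conjugacy of $m_4,m_4'$ to the loop meridian $g$ (of order $4$) give $\GQ=\langle\langle g\rangle\rangle_{\GQ}$, so $\GQ^{ab}$ is cyclic of order dividing $4$, and the content is to rule out order at most $2$, i.e. to show $g^2\notin[\GQ,\GQ]$, equivalently to produce a surjection $\GQ\to\Z/4\Z$. My plan for this is to exploit the inclusion $\GK\le\GQ$ together with the constraint $H_1(\Sth-K)\cong\Z$ — for instance by comparing $H_1(Q)$ with the homology of the intermediate orbifold associated to the orientation-preserving index-two subgroup of $P_Q$, which has a non-rigid $S^2(2,2,2,2)$ cusp and is still covered by $\Sth-K$ — but I regard this as the genuinely delicate point. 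Aside from it, the only routine care needed is in fixing orientations so that each trivalent vertex relation reads $\pm m_a\pm m_b\pm m_c=0$, the signs being immaterial to every ``$2m=0$'' deduction used above, and in invoking the fact that the isotropy graph has no interior loops.
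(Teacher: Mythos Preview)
Your argument for parts (1) and (2) and for the ``no loop $\Rightarrow$ quotient of $\Z/2\Z\times\Z/2\Z$'' half of (3) follows the paper's line exactly: both use $\GQ=P_Q\cdot\GK$ to see that $P_Q^{ab}$ surjects $\GQ^{ab}$, and then read off the extra relation from the isotropy group at the interior vertex meeting the high-order edge ($D_6$ in the $S^2(2,3,6)$ case, $D_4$ or $S_4$ in the $S^2(2,4,4)$ case). The only real divergence is at the point you correctly flag as delicate, namely the converse implication ``loop $\Rightarrow\GQ^{ab}\cong\Z/4\Z$''.

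Here the paper does not pass to the $S^2(2,2,2,2)$ intermediate cover you propose. Instead it invokes Proposition~\ref{cusp_kill} to get $|Q|\cong D^3$, and then reads $\GQ$ off a Wirtinger presentation for the isotropy graph sitting in that ball: one meridian per edge, one relation per interior trivalent vertex, the order relations $m_e^{\ell_e}=1$, and the cusp relation. Since both endpoints of the loop lie on the cusp, the loop meridian $m_4$ appears in \emph{no} interior-vertex relation; the paper therefore asserts that in the abelianization $m_4$ keeps order $4$, so $\GQ^{ab}$ is either $\Z/4\Z$ or $\Z/4\Z\times\Z/2\Z$. It then rules out the extra $\Z/2\Z$ factor with a short cusp-killing argument: an independent $\Z/2\Z$ carried by the order-$2$ cusp edge would force a cycle through that edge labelled entirely by even integers, and such a cycle would already collapse the $4$-torsion on the cusp to order~$2$, contradicting the $\Z/4\Z$ just obtained. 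So the paper's route is a direct Wirtinger computation rather than a covering-space comparison; your plan via the non-rigid double cover and $H_1(\Sth-K)\cong\Z$ is plausible but more circuitous once $|Q|\cong D^3$ is in hand.
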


\begin{proof}
We first note that $\GQ = P_Q \cdot \GK$. Therefore, we claim that
$$\GQ=\left\langle\left\langle
t=\begin{pmatrix} 1 & 1\\ 0 & 1 \end{pmatrix},
r=\begin{pmatrix} \ell & 0\\ 0 & \ell^{-1} \end{pmatrix}
\right\rangle\right\rangle_{P_Q\cdot\GK},$$ 
where $r$ is an elliptic element of order 3, 4, or 6 depending on the cusp type.
The claim follows from the fact that $r$ and $t$
are generators for $P_Q$ and
 $P_Q$ contains a meridian of $\GK$. 

First assume Q has a $S^2(2,3,6)$ cusp, then $\ell = e^{\frac{i\pi}{6}}$.
Since $P_Q$ abelianizes to $\Z/6\Z$, 
we know $\GQ^{ab}$ is a quotient of
$\Z/6\Z$. Also, the torsion element of order 6 is connected to an interior
vertex with the isotropy group a $D_6$, (dihedral group of order 12). Under the abelianization
of this isotropy group, the element of order 6 maps to an element of order 2.
Thus, $\GQ^{ab}$ is a quotient of $\Z/2\Z$, as desired.

Next assume Q has a $S^2(3,3,3)$ cusp. In this case, $\ell = e^{\frac{2i\pi}{3}}$.
Then $\GQ^{ab}$ is a quotient of $\Z/3\Z \times \Z/3\Z$, the abelianization of the peripheral subgroup (see \ref{sect:rigidCusps}).

Finally assume Q has a $S^2(2,4,4)$ cusp, then $\ell=e^{\frac{i\pi}{4}}.$
Hence, $\GQ^{ab}$ is a quotient of the abelianization of $P_Q$, which is $\Z/2\Z \times \Z/4\Z$. 

Consider an edge $e$ labeled by 4-torsion that connects the cusp $c$ to another vertex $x$. Then $x$ is either
the cusp itself, or it corresponds to an isotropy group, $D_4$ or $S_4$ (see $\S$\ref{sect:SubGroupsofSOn}).

\textbf{Case 1:} $e$ is a loop.
In this case, $e$ connects the cusp back to itself. Then, considering a Wirtinger presentation for $\GQ$ coming from
the isotropy graph. $\GQ^{ab}$ is either $\Z/4\Z$ or $\Z/4\Z\times\Z/2\Z$. However, the 
second case cannot occur since this would imply the existence of a path labeled by only even
numbers that starts and ends at the cusp 
 (by the cusp killing homomorphism) and that also includes the 
2 torsion on the cusp. Such a cycle would kill the 4 torsion on the cusp and the maximal abelian quotient would be 
$\Z/2\Z \times \Z/2\Z$. 

\textbf{Case 2:} $x$ corresponds to $D_4$ or $S_4$. 
Under the abelianizations of these groups, elements of order 4 are mapped to elements of order 2. 
Under the abelianization of the cusp, the peripheral elements of order 4 all have the same order in $\GQ^{ab}$.
Therefore, $\Z/2\Z \times \Z/2\Z$ surjects $\GQ^{ab}$. In this case, the isotropy graph of $Q$ does not 
contain a loop. This completes the proof.
\end{proof}

In the following propositions lemmas, we appeal to notation and definition from \cite{BBCW}. In this paper, they define an \emph{orbilens space} to be the quotient of $\Sth/\Delta$ where $\Delta$ is a
finite cyclic subgroup of $SO(4,\mathbb{R})$. As noted in that paper, for any orbilens space $L$,
$|L|$ is a lens space $L(p,q)$. Furthermore, there exists a Heegaard splitting of $|L|$ such that $L$ decomposes into two pieces each of which is the quotient of solid torus under rotation about its core. Using $p,q$ to denote the underlying lens space and $n$ and $m$ to denote the orders of these rotations, we use the notation $L(p,q;n,m)$ to denote an orbilens space. Finally, if two knot complements are cyclically commensurable, they both cover the complement knot in an orbilens space (see \cite[Prop 4.13]{BBCW}). 

The lower bound on the degree of a manifold cover exhibited by this proposition will be used in the next section. However, the classification of knot complements in orbilens spaces that 4-fold cover rigid cusped orbifolds with loops will be useful in this section.

\begin{prop}\label{prop:nofourloop}
If $p\co M\rightarrow Q$ with a $S^2(2,4,4)$ cusp,
the isotropy graph for Q has a loop labeled 4, and $M$ is covered by a knot complement, then
$deg(p)\geq 24$. Furthermore, if there is a loop labeled 4, then there exists $f\co Q_T \rightarrow Q$ where
$Q_T$ is a knot complement
in an orbilens space $O$ and $deg(f)=4$, where the singular locus of $O$ is two unknotted circles. 
\end{prop}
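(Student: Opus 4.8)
The plan is to analyze the cusp-killing homomorphism applied to $\GQ$ in the presence of a loop labeled $4$, and to track carefully what the isotropy graph of $Q$ must look like. First I would invoke Proposition \ref{cusp_kill}: since $M$ is covered by a knot complement and $Q$ has a non-torus cusp, the cusp killing homomorphism $f$ kills $\GQ$ entirely, $|Q|\cong D^3$, and every component of the isotropy graph meets the cusp. Combined with Proposition \ref{prop:abelianQuotients}(3), the presence of a $4$-labeled loop forces $\GQ^{ab}\cong\Z/4\Z$. The loop $e$ at the cusp vertex $c$ carries $4$-torsion; I would show that, up to the reductions described in the Remark following Proposition \ref{cusp_kill}, no other edge of the isotropy graph can be attached to $c$ except the $2$-torsion meridian-type edge forced by the $S^2(2,4,4)$ cusp structure, and in fact the graph must reduce to one whose only remaining singular structure is governed by that single loop together with the cusp data — for otherwise an even-labeled path through $c$ would kill the $4$-torsion, contradicting $\GQ^{ab}=\Z/4\Z$ exactly as in Case 1 of the proof of Proposition \ref{prop:abelianQuotients}.

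Next I would produce the intermediate orbifold $Q_T$. Because $\GQ^{ab}\cong\Z/4\Z$, there is a surjection $\GQ\to\Z/4\Z$; let $\GQ_T$ be the kernel, so $Q_T=\Hth/\GQ_T\to Q$ is a cyclic cover of degree $4$. I would argue that $Q_T$ has a torus cusp: the peripheral subgroup $P_Q$ is generated by a parabolic $t$ and an order-$4$ elliptic $r$, and the composite $P_Q\to\GQ\to\Z/4\Z$ must send $r$ to a generator (since $r$ maps to a generator of $\GQ^{ab}$ by the previous paragraph, using that the $4$-torsion survives), so the preimage of the cusp in $Q_T$ is a manifold cusp, i.e. a torus. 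Since $M\to Q$ factors through a knot complement and $\GK\subset\GQ$ with $\GQ=P_Q\cdot\GK$, the meridian $\mu\in P_Q$ maps into $\Z/4\Z$; I would check $\GK$ also maps onto $\Z/4\Z$ via $\mu$ (knot groups abelianize to $\Z$, and $\mu$ generates), so $\GK\subset\GQ_T$, giving that $\Sth-K$ covers $Q_T$ — hence $|Q_T|$ is obtained from $D^3$-type pieces and, being covered by a knot complement with now a torus cusp, $Q_T$ is a knot complement in a closed orbifold $O=\Hth/\GQ_T$-filled space.

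To see $O$ is an orbilens space with the stated singular locus, I would trace the isotropy graph of $Q$ through the degree-$4$ cyclic cover. The reduced isotropy graph of $Q$ consists of the $4$-loop at $c$ and the $2$-torsion edge(s) forced by the rigid cusp; pulling back under the $\Z/4\Z$-cover and appealing to the classification in $\S$\ref{sect:SubGroupsofSOn} of which finite subgroups of $\SOTR$ occur, the singular locus of the filled orbifold $O$ lifts to (at most) two circles, each fixed pointwise by a cyclic rotation group — no trivalent vertices survive because the only vertex in the reduced graph of $Q$ is $c$, which becomes a manifold point upstairs. I would then appeal to \cite[Prop 4.13]{BBCW} (or the orbilens-space discussion quoted just before the statement): a space that is the filling of a knot complement $Q_T$ along a manifold cusp, with singular locus two unknotted circles each a rotation axis, decomposes along a Heegaard torus into two rotation-quotients of solid tori, i.e. is an orbilens space $L(p,q;n,m)$, and $Q_T$ is the complement of a knot in it. Finally the degree bound $\deg(p)\ge 24$: factor $p\co M\to Q$ through $Q_T$, so $\deg(p)=\deg(M\to Q_T)\cdot 4$, and since $M$ is a manifold covering the orbifold $Q_T$ which still has cone points of order $2$, $\deg(M\to Q_T)$ is divisible by $2$; a more careful count using that the $S^2(2,4,4)$ cusp of $Q$ has cusp volume at least $\tfrac18$ (Theorem \ref{thm:AdamsCusps}) versus the torus cusp of $M$ forces $\deg(p)\ge 24$.

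\textbf{Main obstacle.} The delicate step is the structural claim that, once a $4$-loop is present, the reduced isotropy graph of $Q$ is forced to be so simple that the $4$-fold cyclic cover $Q_T$ has a torus cusp and the filling $O$ has singular locus exactly two unknotted circles — this requires ruling out, via the abelianization constraint of Proposition \ref{prop:abelianQuotients}(3) together with the cusp-killing reductions and the finite-subgroup classification, every configuration in which an $S_4$ or $D_4$ vertex, or a second branch of the graph, interferes with the $4$-torsion; and then correctly identifying the resulting filled orbifold with an orbilens space in the sense of \cite{BBCW}. The degree bound $\ge 24$ is then a comparatively routine volume/covering-degree estimate.
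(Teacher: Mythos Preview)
Your skeleton is right—build a degree-$4$ cover $Q_T$ with torus cusp, identify its filling as an orbilens space, read off the degree bound from the singular labels—but two of the load-bearing steps do not go through as written.

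First, the claim that ``the reduced isotropy graph of $Q$ consists of the $4$-loop at $c$ and the $2$-torsion edge(s)'' is false: the graph can carry substantial internal structure away from the cusp (Figure~\ref{fig:GraphWith4Loop} is an example with $S_4$ and $A_4$ vertices), and the constraint $\GQ^{ab}\cong\Z/4\Z$ only rules out certain even-labeled cycles through $c$, not all additional branches. So your pull-back computation does not determine the singular locus of $O$. More seriously, you never establish that $\Sth-K\to Q_T$ is a \emph{cyclic} cover, and without that you cannot conclude that $O$ is an orbilens space (two singular circles alone do not suffice, and \cite[Prop~4.13]{BBCW} does not say what you attribute to it). The paper handles both issues at once by inserting the intermediate $2$-fold cover $Q'$ with $S^2(2,2,2,2)$ cusp: \cite[Prop~9.1]{NR1} makes $\Sth-K\to Q'$ regular, hence $\Sth-K\to Q_T$ is regular with cyclic deck group, so $Q_T=(\Sth-K)/Z$ and \cite[Lem~3.1]{BBCW} immediately gives that $O$ is an orbilens space with $0$, $1$, or $2$ singular circles; a short case analysis on the isotropy graph of $Q'$ (not $Q$) then eliminates $0$ and $1$.

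Second, your degree bound is too weak: ``$Q_T$ has cone points of order $2$'' yields only $\deg(M\to Q_T)\ge 2$, hence $\deg(p)\ge 8$, and the cusp-volume remark does not close the gap. The actual mechanism is that once $O$ is an orbilens space with two singular circles, their labels $n,m\ge 2$ are \emph{coprime}, so any torsion-free subgroup of $\Gamma_{Q_T}$ has index at least $nm\ge 6$, giving $\deg(p)\ge 24$.
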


\begin{figure}
\centering
\resizebox{2.5 in}{!}{\input{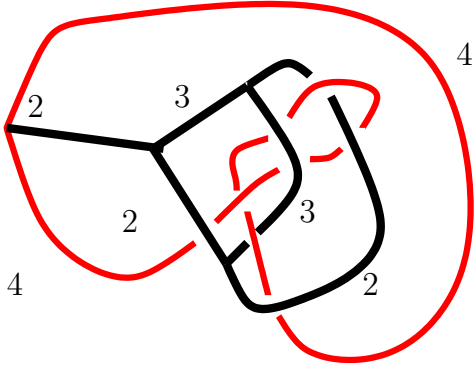}}
\caption{\label{fig:GraphWith4Loop} A possible isotropy graph of an orbifold with a $S^2(2,4,4)$ cusp}
\end{figure}

\begin{proof}

Assume that $Q$ is covered by a knot complement and has an edge labeled 4 in its 
isotropy graph (see Fig \ref{fig:GraphWith4Loop} for a possible example). 
Then $\GQ^{ab}$ is $\Z/4\Z$ (see Prop \ref{prop:abelianQuotients}). 
Hence, there is a unique orbifold $Q'$ which has a $S^2(2,2,2,2)$ cusp and is 2-fold cover of $Q$.

Since $P_{Q'}$ and $P_Q$ have the same parabolic subgroup, $P_{Q'}$ contains a meridian $\mu$ of $\GK$.
The abelianization of $\GQ$ is $\Z/4\Z$ so $\Gamma_{Q'}$ is characteristic in $\GQ$. In particular,
$\langle \langle \mu \rangle \rangle_{\GK} \subset \Gamma_{Q'}$.  Therefore,
$\Sth-K$ covers $Q'$.
Also, $Q'$ has $S^2(2,2,2,2)$ cusp and the cover $p':\Sth-K\rightarrow Q'$ is a regular covering (see \cite[Prop 9.2]{NR1}).
Furthermore, there is a unique 2-fold cover of $Q'$ that has a torus cusp. We denote this orbifold by $Q_T$.
We know that $\Sth-K$ covers $Q_T$ by an identical argument to that above.

Since $Q_T\cong (\Sth-K)/Z$ for some cyclic group $Z$, we see that $Q_T$ is the complement of
 a knot in an orbi-lens space. The isotropy graph for $Q_T$ is either 0, 1, or 2 unknotted
 circles (see \cite[Lem 3.1]{BBCW}). Therefore, the isotropy graph for $Q'$ contains 0, 2, or 4 internal vertices.

 \begin{figure}
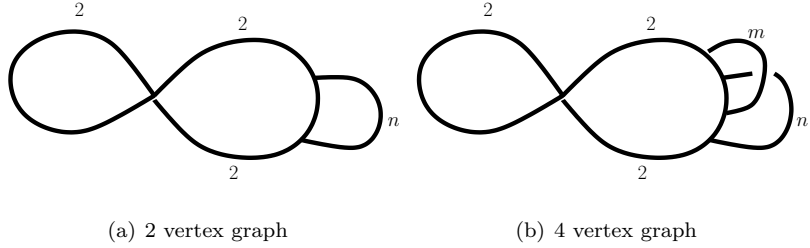

\centering
\subfigure[2 vertex graph]{
\resizebox{2 in}{!}{\input{2vertexIsoGraphforOPrime1.eps_t}}
}
\subfigure[4 vertex graph]{
\resizebox{2 in}{!}{\input{4vertexIsoGraphforOPrime1.eps_t}}
}
\caption{\label{fig:IsotropyGraphsOfQ} The possible isotropy graphs $Q'$}
\end{figure}

The isotropy graph for $Q'$ cannot contain 0 vertices because that would imply that the isotropy graph for $Q$
only had vertices labeled by Klein 4 groups. Such a graph would be non-trivial under the cusp killing homomorphism.
Since the 2-fold of the loop labeled by 4 is a loop labeled by 2, the possible graphs as defined up to graph isomorphism type can be see Figure \ref{fig:IsotropyGraphsOfQ}.

We claim that the isotropy graph for $Q'$ cannot contain 2 internal vertices as well. 
First, notice that the isotropy graph for $Q'$ has an edge $e$ labeled by 2-torsion with both endpoints on the cusp.
Thus, the isotropy graph $Q$ takes the form of Figures \ref{fig:FourLoopCase1},  \ref{fig:FourLoopCase2},  and 
\ref{fig:FourLoopCase3}.  In the latter two cases, the orbifold is non-trivial under the cusp killing homomorphism and therefore cannot be covered by a knot complement. In the first case, we cannot
close up the isotropy graph.  Therefore, no such orbifold $Q$ can be covered by a knot complement. 

Finally, if $Q'$ contains 4 internal vertices, then $Q_T$ is the complement of a knot in an orbi-lens space with an isotropy graph consisting of two unknotted circles.
Since the circles are labeled by $n$-torsion and $m$-torsion with $n,m\geq 2$ and $(n,m)=1$, $\Sth-K$ is at least a 6-fold cover of $Q_T$ and therefore at least a
24-fold cover of $Q$. This completes the proof. 
\end{proof}

\begin{figure}[h]
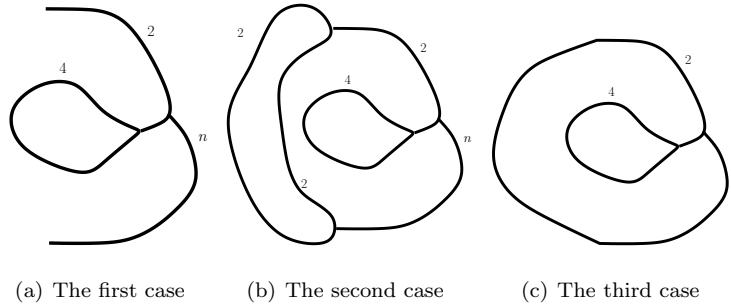

	\centering
\subfigure[The first case
]{	\resizebox{1 in}{!}{\input{244with4loop1.eps_t}}
	\label{fig:FourLoopCase1}
}
\subfigure[The second case]{
	\resizebox{1.25 in}{!}{\input{244with4loop2.eps_t}}
	\label{fig:FourLoopCase2}
}
\subfigure[The third case]{
	\resizebox{1.25 in}{!}{\input{244with4loop3.eps_t}}
	\label{fig:FourLoopCase3}
}
\caption{\label{fig:ThreeCases4loop} The three cases for an isotropy graph with 2 vertices}
\end{figure}

\begin{lem}\label{lem:lowerOrbLensBound}
Let $\Sth-K$ is small knot complement that covers an orbifold $Q_T$ with a torus cusp. Furthermore assume $Q_T$  covers  $Q_r$ with a rigid cusp with covering degree $d$.
\begin{enumerate}
\item If $Q_r$ has a $S^2(3,3,3)$ cusp, then $d > 3$.
\item If $Q_r$ has a $S^2(2,3,6)$ cusp, then $d > 6$.
\item If $Q_r$ has a $S^2(2,4,4)$ cusp, then $d>4$ or $Q_T \cong L(p,q:n,m)-K'$ for some embedded knot $K'$ and some $n\ne 1$ and $m\ne 1$.

\end{enumerate}
\end{lem}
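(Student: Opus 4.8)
The plan is to reduce each part to the \emph{minimal} possible value of $d$ and then play off the abelianization constraints of Proposition \ref{prop:abelianQuotients} against the cusp--killing geometry of Proposition \ref{cusp_kill}. First restrict the degree-$d$ cover $Q_T\to Q_r$ to the cusp cross sections: the torus cusp of $Q_T$ covers the Euclidean $2$-orbifold $S^2(n_1,n_2,n_3)$ of the rigid cusp of $Q_r$, and over a cone point of order $n_i$ each preimage is a manifold point of local degree exactly $n_i$, so $n_i\mid d$. Hence $d$ is a multiple of $n:=\operatorname{lcm}(n_1,n_2,n_3)$, which equals $3$, $6$, $4$ in cases (1), (2), (3). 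Thus it suffices to treat $d=n$, since otherwise $d\geq 2n$ and the asserted conclusion already holds.

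When $d=n$ I first claim the cover $Q_T\to Q_r$ is cyclic of degree $n$. Since $\Gamma_{Q_r}=P_{Q_r}\cdot\GK$ and $\GK\subseteq\Gamma_{Q_T}$, writing $x\in\Gamma_{Q_T}$ as $x=pk$ with $p\in P_{Q_r}$ and $k\in\GK$ gives $p=xk^{-1}\in\Gamma_{Q_T}\cap P_{Q_r}=P_{Q_T}$, so $\Gamma_{Q_T}=P_{Q_T}\cdot\GK$. Now $P_{Q_T}\cong\Z^2$ is a torsion-free index-$n$ subgroup of the Euclidean triangle group $P_{Q_r}$; a torsion-free subgroup of $P_{Q_r}$ contains no nontrivial rotation, hence consists of translations, so $P_{Q_T}$ is exactly the translation lattice $\Lambda\trianglelefteq P_{Q_r}$, with $P_{Q_r}/\Lambda\cong\Z/n\Z$ generated by an order-$n$ cusp rotation $r$. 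Because $\Sth-K$, and hence $Q_T$, has a single cusp, $P_{Q_r}$ acts transitively on the $n$ sheets of $Q_T\to Q_r$, so this action is the regular action of $P_{Q_r}/\Lambda\cong\Z/n\Z$; in particular $\mu\in\Lambda$ acts trivially, and since $\GK=\langle\langle\mu\rangle\rangle_{\GK}$ all of $\GK$ acts trivially. Therefore $\Gamma_{Q_r}$ acts on the sheets through $\Z/n\Z$ regularly, so $\Gamma_{Q_T}\trianglelefteq\Gamma_{Q_r}$ with quotient $\Z/n\Z$, the deck group is generated by the class of $r$, and $\Z/n\Z$ is a quotient of $\Gamma_{Q_r}^{ab}$.

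Parts (2) and (3) now follow quickly. If $Q_r$ has an $S^2(2,3,6)$ cusp, Proposition \ref{prop:abelianQuotients}(1) makes $\Gamma_{Q_r}^{ab}$ a quotient of $\Z/2\Z$, which has no $\Z/6\Z$ quotient -- a contradiction -- so $d>6$. If $Q_r$ has an $S^2(2,4,4)$ cusp, then $\Z/4\Z$ is a quotient of $\Gamma_{Q_r}^{ab}$, so by Proposition \ref{prop:abelianQuotients}(3) the isotropy graph of $Q_r$ has a loop labeled $4$; since $\Gamma_{Q_r}^{ab}\cong\Z/4\Z$, the degree-$4$ cover realizing this quotient is unique, and Proposition \ref{prop:nofourloop} identifies it with $Q_T$ and exhibits $Q_T$ as a knot complement in an orbilens space whose singular locus is two unknotted circles labeled $n',m'\geq 2$ with $(n',m')=1$; that is, $Q_T\cong L(p,q;n',m')-K'$ with $n'\neq 1$ and $m'\neq 1$.

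Part (1), the $S^2(3,3,3)$ case, is the main obstacle, because Proposition \ref{prop:abelianQuotients}(2) only yields a surjection $\Z/3\Z\times\Z/3\Z\to\Gamma_{Q_r}^{ab}$, which does not by itself forbid a $\Z/3\Z$ quotient. Here I would argue geometrically. Smallness of $\Sth-K$ forces $\GK$ to have integral traces, so by Lemmas \ref{lem:intTracesUpperTri}--\ref{lem:xIsAUnit} we may realize $\Gamma_{Q_r}\subset\PSLTA$ with $P_{Q_r}$ the $(3,3,3)$ triangle group containing $\mu=\begin{pmatrix}1&1\\0&1\end{pmatrix}$ and the cusp rotation $r$. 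The deck transformation of the degree-$3$ cyclic cover is the class of $r$, whose axis projects to one of the three edges labeled $3$ of the isotropy graph of $Q_r$ leaving the cusp. Using that the isotropy graph of $Q_r$ is connected to the cusp and is killed by the cusp-killing homomorphism (Proposition \ref{cusp_kill}), that its interior vertices carry finite subgroups of $\SOTR$ with $3$-torsion, and the inter-axis angle formulas of \S\ref{sect:SubGroupsofSOn} (which constrain, and in particular rule out, tetrahedral and icosahedral vertices along that edge), I would trace this edge and the induced collapses of the graph: in $Q_T$ the lift of that edge carries no singularity, which forces enough of the $3$-torsion of $Q_r$ to collapse that the isotropy graph cannot simultaneously remain connected to the cusp and close up -- a contradiction, giving $d>3$. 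Carrying out this case analysis of the isotropy graph is the substantive work.
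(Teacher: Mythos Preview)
Your normality argument for the case $d=n$ is correct and is a genuinely different organizing idea from the paper's. Once you know $\Gamma_{Q_T}\trianglelefteq\Gamma_{Q_r}$ with cyclic quotient $\Z/n\Z$, parts (2) and (3) fall out cleanly from Proposition~\ref{prop:abelianQuotients} and Proposition~\ref{prop:nofourloop}, whereas the paper argues (2) by an ad hoc coset action in $S_6$ reducing to (1), and argues (3) by a direct case split on where the $4$--edge lands (loop / $D_4$ / $S_4$). Your route is tidier here and isolates the abelianization obstruction as the essential point.

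Part (1), however, has a real gap. Your sketch invokes integral traces and the inter-axis angle formulas of \S\ref{sect:SubGroupsofSOn} to ``rule out tetrahedral and icosahedral vertices'' along the edge fixed by the deck rotation, but this is not what those angle formulas do, and in fact $A_4$ vertices are \emph{not} excluded: $A_4$ is exactly the isotropy group with a normal index--$3$ subgroup (namely $V_4$), so it is compatible with your regular $\Z/3$ cover. The integral representation lemmas of \S\ref{sect:knotReps} play no role here. What you are missing is the one extra fact the paper uses implicitly: since $Q_T$ has a torus cusp, the cover $\Sth-K\to Q_T$ is regular (see \cite[Prop 9.1]{NR1}), hence cyclic, so $Q_T$ is a knot complement in an orbilens space and has only \emph{cyclic} isotropy. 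Given that, the argument for $d=3$ is immediate and much shorter than you suggest: each interior vertex group $G$ of $Q_r$ must admit an index--$3$ cyclic subgroup, and among $D_3,A_4,S_4,A_5$ only $D_3$ does. But if every interior vertex is $D_3$, then every interior edge carries $2$--torsion and therefore never meets the $S^2(3,3,3)$ cusp; the cusp-killing homomorphism then leaves this $2$--torsion untouched, contradicting Proposition~\ref{cusp_kill}. (Equivalently, via your normality: any interior vertex on a $3$--edge from the cusp has $G\not\subset\Gamma_{Q_T}$, forcing $G\cap\Gamma_{Q_T}$ normal of index $3$ in $G$, hence $G=A_4$ and $V_4\subset\Gamma_{Q_T}$ --- but then $Q_T$ has non-cyclic isotropy $V_4$, the same contradiction.)
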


\begin{proof}
\textbf{Case 1: $Q_r$ has a $S^2(3,3,3)$ cusp.}
First note that $d\geq 3$ since $Q_r$ has 3-torsion on the cusp and $Q_T$ has a torus cusp.
If $d=3$, then all isotropy groups must lift cyclic groups. From inspection of the possible isotropy groups, all groups must be $D_3$ since no other candidates have index 3 cyclic groups. In this case, $\Gamma_r$ would be non-trivial under the cusp killing map, so it cannot be covered by a knot complement. Thus, $d>3$.

\textbf{Case 2: $Q_r$ has a $S^2(2,3,6)$ cusp.}
By a similar argument to that above $d \geq 6$. If $d=6$, then $[\Gamma_r:\Gamma_T]=6$ and there is a homomorphism from $\Gamma_r$ to $S_6$ via left multiplication on the cosets of $\Gamma_T$. Denote by $r$ an element of 6-torsion that fixes a point on the cusp. Then, 
$\Gamma_r = {\Gamma_T, r\cdot \Gamma_T, r^2\cdot \Gamma_T,r^3\cdot \Gamma_T,r^4\cdot \Gamma_T,r^5\cdot \Gamma_T}$.  Hence, $r$ maps to an element of order 6 and the image of $\Gamma_r$ is not a subgroup of  $A_6$. Therefore, $\Gamma_r$ admits a $\Z/2\Z$ quotient and a unique subgroup $\Gamma_{r'}$ of index 2 which contains $\Gamma_T$. Such a group corresponds to an orbifold with a $S^2(3,3,3)$ cusp and so we have reduced the problem to the previous case. 

\textbf{Case 3: $Q_r$ has a $S^2(2,4,4)$ cusp.}
If $d=4$, then the 4-torsion either forms a loop, is connected to a $D_4$ or $S_4$ isotropy group. The first case is covered by Proposition \ref{prop:nofourloop}. In the second case, some 2-torsion must survive the lift. However, the 2-torsion on the cusp must be a part of a $D_{2n+1}$ in order to lift to a cyclic group and allow for $\Gamma_r$ to be non-trivial under the cusp killing map. In the final case, $S_4$ has no cyclic subgroups of index 4. 

\end{proof}

We are now ready to prove Theorem \ref{mainthm1}.

\begin{proof}[Proof of Theorem \ref{mainthm1}.]
Since $\Sth-K$ is small, $\GK$ admits integral traces (see \cite{Bass1980}). Also, all groups commensurable with $\GK$ admit integral traces.   
In particular, $\GQ$ admits integral traces and therefore a representation into $\PSLTA$ (see \cite[Lem 5.2.4]{MR}). 

Let $\Sth-K'$ be a knot complement that is cyclically commensurable with $\Sth-K$. Therefore, 
both knot complements cover an orbifold $Q_T$, which is a knot complement in an orbilens 
space and the maximal volume orbifold covered by both  $\Sth-K'$  and  $\Sth-K$   (see \cite
[Prop 4.7]{BBCW}). We may assume that $Q$ is the orientable commensurator quotient and 
therefore both $\Sth-K$ and $\Sth-K'$ cover $Q$. Hence, $Q_T$ covers $Q$. Using the 
representation for $\GQ$, we know that $\Gamma_T$ admits an integral representation and 
therefore we may assume that $P_T$ is upper triangular and contains two distinct knot 
meridians in it (see Lem \ref{lem:intTracesUpperTri}). Therefore there are two units in upper 
right entries of these meridians (see Lem \ref{lem:xIsAUnit}). Hence, $P_T$ is the minimal 
index abelian subgroup of $P_Q$. Since the index of $P_T$ in $P_Q$ is the degree of the 
cover $p:Q_T \rightarrow Q$, this contracts Lemma \ref{lem:lowerOrbLensBound} unless $Q$ 
has a $S^2(2,4,4)$ cusp and $Q_T\cong L(p,q;n,m)-K'$. In this case, $n\ne 1$ and $m\ne 1$. Hence, $Q_T$ cannot be covered by two knot complements (see \cite[Thm 1.8]{BBCW}). 
This completes the proof.
\end{proof}

\section{Proof of Theorem \ref{mainthm2}}\label{sect:Thm2Proof}

In this section, we prove Theorem \ref{mainthm2}.  The first step of the proof involves two 
arguments 
that orbifolds with 
certain prescribed cusp volumes cannot be covered by knot complements. The second 
ingredient 
is lower bound on the degree of the covering $p\co M \rightarrow Q$ where $M$ is a 
manifold covered by a small knot complement and $Q$ is a rigid cusped orbifold.  
The third part of the proof establishes an upper bound on $deg(p)$ by combining the Six Theorem of Agol and Lackenby (see \cite{Agol6Theorem}, \cite{Lackenby6Theorem}) with the lemmas of $\S$\ref{sect:knotReps}. 

We use the cusp killing homomorphism (see Prop \ref{cusp_kill}) to prove the following lemma. Also, in the following proofs, we will identify $\Hth$ with $\{z+cj \in \mathbb{H} | z \in \mathbb{C}, c>0, j^2=-1\}$ 
(upper-half space)
and $\partial \Hth$ with $\mathbb{C} \cup \{\infty\}$. Here, we denote by $B_x$ the horoball that is tangent to $\partial \Hth$ at $x$.

\begin{lem}\label{nosqrt21over12}
\begin{enumerate}
\item Any orbifold  with a $S^2(3,3,3)$ cusp and cusp volume $\frac{\sqrt{21}}{12}$ cannot be covered by a knot complement.
\item Any orbifold  with a $S^2(2,3,6)$ cusp and cusp volume $\frac{\sqrt{21}}{24}$ cannot be covered by a knot complement.
\end{enumerate}
\end{lem}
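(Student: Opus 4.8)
The plan is to use the cusp volume information together with Proposition \ref{cusp_kill} (the cusp killing homomorphism) and the arithmetic of the relevant orbifolds. The two statements are linked: the orbifold with a $S^2(2,3,6)$ cusp of volume $\frac{\sqrt{21}}{24}$ has, by Adams' remarks in $\S$\ref{sect:rigidCusps}, a unique double cover with a $S^2(3,3,3)$ cusp of volume $\frac{\sqrt{21}}{12}$. So it suffices to prove (1), since if the $S^2(2,3,6)$-cusped orbifold $Q$ were covered by a knot complement $\Sth - K$, then $\Sth - K$ would also cover the $S^2(3,3,3)$-cusped double cover (the meridian of $\GK$ lies in the peripheral subgroup, which injects into the double cover, so $\langle\langle\mu\rangle\rangle_{\GK}$ is contained in the index-2 subgroup), contradicting (1). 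I would state this reduction first.

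For (1), let $Q_0$ be an orbifold with a $S^2(3,3,3)$ cusp and cusp volume $\frac{\sqrt{21}}{12}$, and suppose $\Sth - K$ covers $Q_0$. The plan is to produce enough of the structure of $Q_0$ to contradict Proposition \ref{cusp_kill}, which forces $|Q_0| \cong D^3$ with every component of the isotropy graph meeting the cusp. The cusp volume $\frac{\sqrt{21}}{12}$ signals that $Q_0$ is non-arithmetic (it is not in the arithmetic list of Proposition \ref{prop:AdamsAndNR2}(2)): indeed $\sqrt{21} = \sqrt{3}\sqrt{7}$, so the cusp shape field involves $\mathbb{Q}(\sqrt{-3},\sqrt{-7})$ or similar, and the invariant trace field is not $\mathbb{Q}(\sqrt{-3})$. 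I would then identify the commensurability class — this should be the commensurability class of the orbifold whose $S^2(2,3,6)$ cusp has volume $\frac{\sqrt{21}}{24}$, which Neumann–Reid \cite{NR2} describe (related to a Bianchi-type or to a specific non-arithmetic tetrahedral/reflection group) — and use their explicit description, or a direct horoball-packing argument à la Meyerhoff/Adams, to pin down the minimal-volume orbifold and its isotropy graph. The key geometric input is the cusp density bound $\frac{\sqrt 3}{2v_0}$: the cusp volume $\frac{\sqrt{21}}{12} \approx 0.382$ together with this bound forces the total volume to be small, which constrains the possible isotropy graphs severely.

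The concrete contradiction I expect: either the isotropy graph of $Q_0$ (or of the commensurator quotient it covers) has a component disjoint from the cusp, or relabeling via the cusp-killing analysis (the Remark after Proposition \ref{cusp_kill}) produces a nontrivial quotient of $\GQ$ by the normal closure of the peripheral torsion, both of which contradict Proposition \ref{cusp_kill}. Alternatively, one can run the argument through the abelian quotient: by Proposition \ref{prop:abelianQuotients}(2), $\Z/3\Z \times \Z/3\Z$ surjects $\Gamma_{Q_0}^{ab}$, so $Q_0$ has a $(\Z/3\Z)^2$-cover, and tracking cusp volumes under this cover together with Theorem \ref{thm:AdamsCusps} and the fact that $H_1$ of a knot complement is $\Z$ gives the contradiction — a $(\Z/3\Z)^2$ quotient cannot be a quotient of $\Z = H_1(\Sth - K)$ unless it factors through the covering group, and then the covering orbifold has a torus cusp with a too-small cusp field.

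The hard part will be correctly identifying the orbifold(s) with this cusp volume and their isotropy graphs — this is a bookkeeping exercise in the Neumann–Reid census combined with Adams' uniqueness statements, and the argument's validity rests on getting that identification exactly right. Once the relevant orbifold (equivalently, its minimal-volume representative in the commensurability class) is pinned down, the contradiction with the cusp-killing homomorphism should be immediate from the structure of its isotropy graph.
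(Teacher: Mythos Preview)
Your proposal has genuine gaps and diverges substantially from the paper's method.

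On the reduction: you reduce (2) to (1), arguing that a knot complement covering the $S^2(2,3,6)$ orbifold must also cover its $S^2(3,3,3)$ double cover. The paper reduces in the opposite direction: by Adams, the $S^2(3,3,3)$ orbifold with cusp volume $\frac{\sqrt{21}}{12}$ itself double-covers the $S^2(2,3,6)$ orbifold with cusp volume $\frac{\sqrt{21}}{24}$, so (2) implies (1) immediately. Your reduction can be made to work (the meridian is parabolic, hence lies in the index-2 normal subgroup, hence so does $\GK=\langle\langle\mu\rangle\rangle_{\GK}$), but it is the less economical direction.

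The substantive gap is in your main argument. You propose to identify the isotropy graph of the orbifold from the Neumann--Reid census and obtain a contradiction from Proposition~\ref{cusp_kill}, but you never carry this out, and the $\frac{\sqrt{21}}{24}$ orbifold is precisely one that is \emph{not} described with an isotropy graph in the cited sources (it is neither arithmetic nor the tetrahedral $\Gamma(5,2,2;2,3,6)$). There is no reason to expect the cusp-killing homomorphism to be nontrivial here; that obstruction may simply be absent. Your alternative via abelian quotients is also not a proof: Proposition~\ref{prop:abelianQuotients}(2) says only that $(\Z/3\Z)^2$ \emph{surjects} $\GQ^{ab}$, so $\GQ^{ab}$ may well be trivial or $\Z/3\Z$, and no $(\Z/3\Z)^2$-cover need exist.

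The paper's proof is of an entirely different character and does not touch the isotropy graph. It uses Adams' explicit horoball diagram for the $S^2(2,3,6)$ orbifold to write down matrix generators $t,r,\gamma$ of $\GQ$ with $t=\left(\begin{smallmatrix}1&\sqrt[4]{7}\\0&1\end{smallmatrix}\right)$, then conjugates by a diagonal element to obtain an integral representation in which the parabolic generator is $\left(\begin{smallmatrix}1&\alpha\\0&1\end{smallmatrix}\right)$ with $\alpha$ a root of $q^4-5q^2+7$. Since the constant term is $7$, $\alpha$ is an algebraic integer but not a unit; hence every upper-triangular parabolic in this integral representation has a non-unit upper-right entry, contradicting Lemma~\ref{lem:xIsAUnit} for any knot group $\GK\subset\GQ$. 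The obstruction is number-theoretic, coming from \S\ref{sect:knotReps}, not topological --- which is why your isotropy-graph plan does not reach it.
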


\begin{proof}
First, we appeal to Adams' characterization of orbifolds of small cusp volume (see $\S$\ref{sect:rigidCusps}) to 
reduce to case (2) as any orbifold with a $S^2(3,3,3)$ cusp and  cusp volume $\frac{\sqrt{21}}{12}$ covers an orbifold 
with $S^2(2,3,6)$ cusp and cusp volume $\frac{\sqrt{21}}{24}$. 

Hence, let $Q$ be a 3-orbifold with a $S^2(2,3,6)$ cusp and cusp volume $\frac{\sqrt{21}}{24}$.
A diagram of the horoballs associated to $Q$ first appeared in Adams' paper (see \cite[Fig 5]{Adams1}), 
however we include it here as Figure \ref{fig:Sqrt21over12horoballs}
for the sake of completeness. Furthermore, following the discussion
of this horoball diagram in \cite{Adams1}, we use the following notation: 
$O=0$, $D=\sqrt[4]{7}$, $X=\frac{5+i\sqrt{3}}{2\sqrt{7}}$,  and $Y=\frac{\sqrt[4]{7}}{2}+i\frac{\sqrt[4]{7}}{2\sqrt{3}}$. 

In this figure, there are four horoballs pictured. Following the description of this diagram from Adams' work, 
the horoballs $B_O$ and $B_D$ are of Euclidean diameter 1 and maximal in the 
sense that they are tangent to the horoball based at $\infty$. The horoball $B_X$ has Euclidean diameter
$\frac{1}{\sqrt{7}}$ and the horoball $B_Y$ has Euclidean diameter $\frac{3}{7}$. 
The line segment $\overline{OY}$ is length $w=\frac{\sqrt[4]{7}}{\sqrt{3}}$ while the line segment $\overline{OX}$ is length
$\frac{1}{\sqrt[4]{7}}$.  In particular, $\frac{\sqrt[4]{7}}{\sqrt{3}}\approx 0.939104416 < 1$.

\begin{figure}
\centering
\resizebox{8cm}{!}{\input{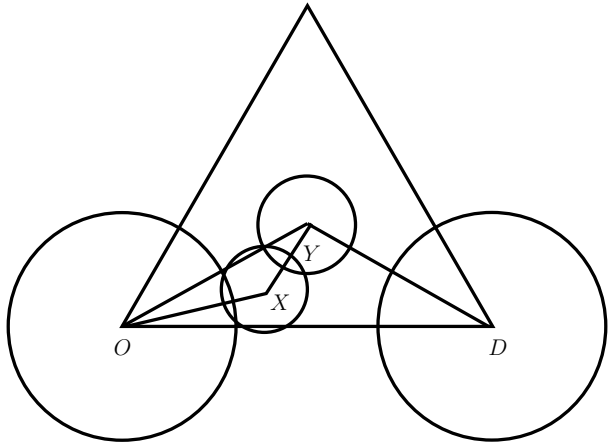}}
\caption{\label{fig:Sqrt21over12horoballs} A horoball diagram for $Q$}
\end{figure}

Under Adams' description of $Q$, we see that $\Gamma_Q$ contains a parabolic element $t$ 
such that $t(\infty)=\infty$ and $t(0)=\sqrt[4]{7}$. In addition, $\Gamma_Q$ contains 
an order 6 rotation $r$ that fixes $0$ and $\infty$. 
Finally, as Adams notes, all horoballs of Euclidean diameter 1 are equivalent under the action of $P_Q$. 
Therefore, there is an element $\gamma$ 
that exchanges $0$ and $\infty$ while sending $\sqrt[4]{7}$ to $\frac{5+\sqrt{-3}}{2\cdot\sqrt[4]{7^3}}$ 
(see \cite[Lem 1.2]{Adams1}). 
Hence,
$$t = \begin{pmatrix} 
 1 & \sqrt[4]{7}\\
0 & 1
\end{pmatrix},
\mbox{ }
r =\begin{pmatrix} 
\ell & 0\\
0 & \ell^{-1}
\end{pmatrix},
\mbox{ and }
\gamma = \begin{pmatrix} 
 0 & i\cdot b\\
\frac{i}{b} & 0
\end{pmatrix}
$$
where $\ell = \frac{\sqrt{3}+i}{2}$ and $b =\frac{\sqrt{5+i\sqrt{3}}}{\sqrt{2\sqrt{7}}}$.

The isometric sphere of $\gamma$ is of radius 1 and centered at 0 (see Fig \ref{fig:Sqrt21over12IsoSpheres}). Hence, the isometric sphere for $t\cdot\gamma\cdot t^{-1}$ is radius 1 and centered at $\sqrt[4]{7}$. Let $\Gamma = \left\langle t, r, \gamma \right\rangle$. Since these two isometric spheres bound a fundamental domain for $\Gamma$ away from $\mathbb{C}$, 
$\Gamma$ has finite co-volume. Also, since the cusp co-volume of $\Gamma$ is $\frac{\sqrt{21}}{24}$, $[\Gamma_Q:\Gamma]=1$.

\begin{figure}
\centering
\resizebox{9cm}{!}{\input{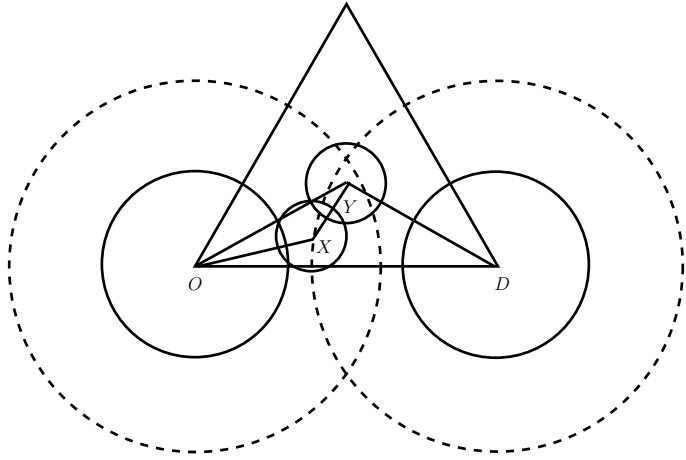}}
\caption{\label{fig:Sqrt21over12IsoSpheres} A horoball diagram for $Q$ with the isometric spheres represented by dotted curves around $0$ and $D$}
\end{figure}

Let $\lambda = \frac{1}{\sqrt{b}}$ and 
$$g = \begin{pmatrix}
 \lambda & 0\\
0 & \lambda^{-1}
\end{pmatrix}.
$$

Then, 
$$g\cdot \Gamma \cdot g^{-1} = \left\langle  t'=\begin{pmatrix}
 1 & \alpha\\
0 & 1
\end{pmatrix},   \begin{pmatrix}
\ell & 0\\
0 & \ell^{-1}
\end{pmatrix},
\gamma' = \begin{pmatrix}
0 & i\\
i & 0
\end{pmatrix}
\right\rangle$$
where $\alpha = \sqrt{\frac{14}{5+i\sqrt{3}}}$.  Note the minimal polynomial for $\alpha$ is $q^4-5q^2+7=0$. Thus, $\alpha$ is an algebraic integer, but not a unit
since the constant term of this polynomial is not 1.
  Under this integral representation of $\Gamma_Q$, there are upper parabolic elements. 
  Hence, a knot complement covering $Q$ would contradict Lemma \ref{lem:xIsAUnit}.
\end{proof}

The following proposition discusses which finite groups can act on a point of tangency between two horoballs.

\begin{prop}\label{propD3orC3}
Consider a maximal horoball packing corresponding to an orbifold with a rigid cusp. 
Denote by $B_x$ the horoball centered x and denote by $B_{\infty}$ the horoball at $\infty$.
If $y$ is the point of tangency of $B_x$ and $B_{\infty}$ and $y$ fixed by an 
element $\gamma \in Stab_{\infty}$, then
the isotropy group of y is $C_n$ or $D_n$ where $n=2,3,4,6$. 
\end{prop}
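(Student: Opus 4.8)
The plan is to normalise coordinates, identify the isotropy group of $y$ as a finite subgroup of $\SOTR$ that has a cyclic subgroup of index at most $2$, and then read off the allowed orders from the rigid cusp structure. First I would conjugate so that $B_{\infty}=\{z+cj : c\ge h\}$ for some $h>0$ and so that $B_x$ is centred at $x\in\mathbb{C}$; since $B_x$ is tangent to $B_{\infty}$ the point of tangency is $y=x+hj$ and $B_x$ has Euclidean diameter $h$. An element $\gamma\in Stab_{\infty}=P_Q$ fixing $y$ also fixes $\infty$, hence fixes the geodesic from $\infty$ through $y$, which is the vertical line $\eta$ over $x$; thus $\gamma$ fixes $x$ and is a rotation of $\Hth$ about $\eta$. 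In particular the isotropy group $G$ of $y$ in $\Gamma_Q$ is a non-trivial finite subgroup of $\SOTR$, so $G$ is one of $C_n$, $D_n$, $A_4$, $S_4$, $A_5$ (see $\S$\ref{sect:SubGroupsofSOn}).

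The key geometric observation is that $B_x$ and $B_{\infty}$ are the only two horoballs of the maximal packing whose closures contain $y$. Indeed, if $B$ is a packing horoball with $y\in\overline{B}$, then (interiors being disjoint) $y\in\partial B$, so the horospheres $\partial B$ and $\partial B_x$ are tangent at $y$ and share the horizontal tangent plane $\{c=h\}$ there; if $B$ lies on the upper side it must be the half-space $B_{\infty}$, and if it lies on the lower side it is a Euclidean ball tangent to $\mathbb{C}$ with topmost point $y$, hence the ball of diameter $h$ centred at $x+\tfrac{h}{2}j$, which is $B_x$. Consequently $G$ permutes $\{B_x,B_{\infty}\}$, giving a homomorphism $G\to\mathbb{Z}/2\mathbb{Z}$ whose kernel $C$ is the set of elements of $G$ fixing $B_x$. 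Such an element fixes $B_x$ (hence $x$), $B_{\infty}$ (hence $\infty$) and $y$, so it is a rotation about $\eta$; conversely every element of $\Gamma_Q$ that is a rotation about $\eta$ fixes $y$ and the unique packing horoball at $x$. Thus $C$ is cyclic and equals the rotation subgroup of $P_Q$ at the point $x\in\mathbb{C}$, and $[G:C]\le 2$. Since $A_4$, $S_4$ and $A_5$ have no cyclic subgroup of index at most $2$, we get $G\cong C_n$ or $D_n$.

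It remains to show that the relevant $n$ lies in $\{2,3,4,6\}$. Because the cusp is rigid, $P_Q$ acts on $\mathbb{C}$ as the orbifold fundamental group of $S^2(2,4,4)$, $S^2(3,3,3)$, or $S^2(2,3,6)$, so by the crystallographic restriction the rotation subgroup $C$ at any point has order $1,2,3,4$ or $6$; as the given $\gamma\in C$ is non-trivial, $|C|\in\{2,3,4,6\}$. If $G=C$ we are done with $n=|C|$. If $[G:C]=2$, pick $\rho\in G\setminus C$; then $\rho(B_x)=B_{\infty}$, so $\rho$ interchanges the ideal endpoints $x$ and $\infty$ of $\eta$, hence on the unit tangent sphere at $y$ it sends the direction along $\eta$ to its antipode, which forces $\rho$ to be a rotation by $\pi$. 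In particular $\rho$ has order $2$, and conjugation by $\rho$ reverses the orientation of $\eta$ and so inverts every rotation about $\eta$; therefore $G=\langle C,\rho\rangle\cong D_{|C|}$. In either case $G$ is $C_n$ or $D_n$ with $n=|C|\in\{2,3,4,6\}$.

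The main obstacle is the geometric input in the second paragraph: verifying that $y$ lies in the closure of exactly two packing horoballs, since this is precisely what bounds $[G:C]$ by $2$ and rules out $A_4$, $S_4$, $A_5$. The remaining ingredients — the classification of finite subgroups of $\SOTR$, the crystallographic restriction for the rigid cusp group, and the observation that an isometry interchanging the two ends of $\eta$ while fixing $y$ is a half-turn — are routine.
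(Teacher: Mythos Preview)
Your proof is correct. The paper reaches the same conclusion by a slightly different route: instead of isolating the two horoballs tangent at $y$ and extracting a homomorphism $G\to\Z/2\Z$ with cyclic kernel, it argues directly with rotation axes. If $G$ were not cyclic or dihedral, then by the angle computations of $\S$\ref{sect:SubGroupsofSOn} some $\gamma'\in G$ has an axis meeting the vertical geodesic $\eta$ at an acute angle; that axis then penetrates the interior of $B_{\infty}$, yet $\gamma'$ does not fix $\infty$ and hence carries $B_{\infty}$ to a distinct packing horoball, contradicting disjointness of interiors. Both arguments rest on the same geometric input---disjointness of horoball interiors in the maximal packing---but you encode it as the index bound $[G:C]\le 2$ (which immediately excludes $A_4$, $S_4$, $A_5$), whereas the paper encodes it as an obstruction to acute-angle axes. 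Your version has the small advantage of not invoking the spherical cosine law data from $\S$\ref{sect:SubGroupsofSOn}, and it makes explicit why the index-$2$ extension is genuinely dihedral.
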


\begin{proof}
First $\gamma$ is order 2,3,4, or 6 because it fixes $\infty$.
If $Stab_y$ is cyclic or dihedral, we are done.  

Denote by $\gamma'$ be an element of the isotropy group of $y$ such that axis fixed by $\gamma'$ intersects the
axis fixed  $\gamma$ at the smallest angle possible. Denote this angle by $\alpha$. If $\alpha=\frac{\pi}{2}$, then, 
$\langle \gamma,\gamma' \rangle$ is dihedral (see $\S$\ref{sect:SubGroupsofSOn}). Hence, we may assume that $\alpha<\frac{\pi}{2}$.
Therefore, $\gamma'$ fixes points inside of $B_{\infty}$. However, $\gamma'(B_{\infty}) \cap B_{\infty}=\emptyset$ and
$\gamma'$ does not fix $\infty$,
which is a contradiction.
\end{proof}

We are now ready to prove the following lemma.

\begin{lem}\label{noLen6exceptionalslope}
\begin{enumerate}
\item Any orbifold $Q$ with a $S^2(3,3,3)$ cusp and cusp volume $\frac{\sqrt{3}}{4}$ such that $\Gamma_Q$ 
admits integral traces cannot be covered by a knot complement. 
\item Any orbifold $Q$ with a $S^2(2,3,6)$ cusp and cusp 
volume $\frac{\sqrt{3}}{8}$ such that $\Gamma_Q$ admits integral traces cannot be covered by a knot complement. 
\end{enumerate}
\end{lem}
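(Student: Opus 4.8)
The plan is to run, for both parts, the argument of Lemma~\ref{nosqrt21over12}: starting from an explicit maximal horoball diagram for the orbifold with the prescribed threshold cusp volume, extract generators of $\GQ$, and pass to an integral representation in which a meridian of $\GK$ is forced to have a non-unit in its upper right entry, contradicting Lemma~\ref{lem:xIsAUnit}.

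First I would record, citing Adams and Neumann--Reid (\cite{Adams1},\cite{NR2}) as in the discussion preceding Lemma~\ref{nosqrt21over12}, the shape of the maximal horoball packing for an orbifold $Q$ realizing the bound ``at least $\tfrac{\sqrt3}{8}$'' (resp.\ ``at least $\tfrac{\sqrt3}{4}$'') of Theorem~\ref{thm:AdamsCusps}: the rotation $r$ fixing the cusp (of order $6$, fixing $0$ and $\infty$, in case (2); of order $3$ in case (1)) also stabilizes a tangency point with $B_\infty$, so that the full-sized horoball tangent to $B_\infty$ lies over a lattice point and, after a peripheral translation, over $0$. Hence $0$ is a cusp point and $\GQ$ contains $\gamma$ with $\gamma(\infty)=0$. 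Here Proposition~\ref{propD3orC3} shows the isotropy group of the tangency point of $B_0$ with $B_\infty$ is $C_n$ or $D_n$, and Proposition~\ref{cusp_kill} eliminates the isotropy-graph shapes incompatible with being covered by a knot complement; together these pin down the relevant local picture. (This is also the natural place to reduce (1) to (2) if one prefers: an orbifold with a $S^2(3,3,3)$ cusp of volume $\tfrac{\sqrt3}{4}$ covered by a knot complement double covers one with a $S^2(2,3,6)$ cusp of volume $\tfrac{\sqrt3}{8}$, and integral traces descend; alternatively the computation below applies verbatim in case (1).)

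Now suppose $\Sth-K$ covers $Q$. As $\GK$ is small it has integral traces (\cite{Bass1980}), hence so does $\GQ$, so we may conjugate $\GQ$ into $PSL(2,\mathcal{O}_L)$, $L$ a number field, with a meridian $\mu=\begin{pmatrix}1&1\\0&1\end{pmatrix}$ (Lemmas~\ref{lem:intTracesUpperTri} and~\ref{lem:xIsAUnit}) and $r=\begin{pmatrix}\zeta&0\\0&\zeta^{-1}\end{pmatrix}$ fixing $0$ and $\infty$. Then $r\mu r^{-1}$ is translation by a primitive sixth (resp.\ third) root of unity, so the translation lattice $\Lambda$ of $P_Q$ contains the Eisenstein lattice $\Lambda_0=\Z[\zeta_6]$ of covolume $\tfrac{\sqrt3}{2}$; put $m=[\Lambda:\Lambda_0]$, so $\Lambda$ has covolume $\tfrac{\sqrt3}{2m}$. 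If $B_\infty=\{t\ge t_0\}$ is the maximal horoball at $\infty$, the cusp volume is $\tfrac{\sqrt3}{24m t_0^2}$ in case (2) and $\tfrac{\sqrt3}{12m t_0^2}$ in case (1); equating to $\tfrac{\sqrt3}{8}$, resp.\ $\tfrac{\sqrt3}{4}$, gives $t_0^2=\tfrac{1}{3m}$ in both cases. Writing $\gamma=\begin{pmatrix}0&b\\c&d\end{pmatrix}$ (forced by $\gamma(\infty)=0$) with $-bc=1$, the horoball $\gamma(B_\infty)$ sits over $0$ with Euclidean diameter $1/(t_0|c|^2)$, and since it is the full-sized horoball $B_0$ tangent to $B_\infty$ this diameter equals $t_0$; hence $|c|^2=1/t_0^2=3m$. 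But $b=-c^{-1}$ with $b,c\in\mathcal{O}_L$, so $c$ is a unit, and then $c\bar c=3m$ is a unit of $\mathcal{O}_L$ --- impossible, as $3m\ge 3$ is a rational integer. This contradiction proves the lemma.

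The step I expect to be the genuine obstacle is the first one: producing the explicit maximal horoball configuration for precisely these two threshold cusp volumes, and in particular verifying that the cusp-fixing rotation also fixes a tangency point with $B_\infty$ --- equivalently, that the full-sized horoball tangent to $B_\infty$ lies over a lattice point, so that the element $\gamma$ with $\gamma(\infty)=0$ exists. This is where Adams' and Neumann--Reid's analysis (and the horoball-density estimate behind Theorem~\ref{thm:AdamsCusps}) does the real work, and it is the reason the integral-traces hypothesis is stated: unlike the value $\tfrac{\sqrt{21}}{24}$ of Lemma~\ref{nosqrt21over12}, the orbifold realizing cusp volume $\tfrac{\sqrt3}{8}$ need not be unique, so one restricts to those with integral traces in order to apply Lemma~\ref{lem:xIsAUnit}. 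Once the configuration and $\gamma$ are in hand, the arithmetic contradiction is routine.
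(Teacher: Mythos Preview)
Your overall strategy---force a non-unit via the horoball geometry and contradict Lemma~\ref{lem:xIsAUnit}---is the paper's strategy, but your execution differs from the paper's and has real gaps.

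First, the reduction of (1) to (2) you propose is not available: an $S^2(3,3,3)$-cusped orbifold need not double cover an $S^2(2,3,6)$-cusped one. (Adams' uniqueness statements, used for the reduction in Lemma~\ref{nosqrt21over12}, apply to the isolated sub-threshold values, not to the thresholds $\tfrac{\sqrt3}{4}$ and $\tfrac{\sqrt3}{8}$.)

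Second, the step you flag as the obstacle really is one, and citing Adams--Neumann--Reid does not resolve it. At the threshold cusp volumes the orbifold is \emph{not} determined, so there is no off-the-shelf horoball picture to quote; you must actually argue that the full-sized horoball tangent to $B_\infty$ sits over a $P_Q$-rotation center, equivalently that your $r$-fixed point $0$ is a parabolic fixed point of $\GQ$. Without this your element $\gamma$ need not exist. A related issue: conjugating so that $r$ fixes $0$ is a translation by $u/(\zeta^{-1}-\zeta)$, which need not be integral, so integrality of the representation can be lost before you ever reach the $|c|^2=3m$ computation.

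The paper avoids both problems by normalizing the other way: it puts the full horoball at $0$ (so $0$ is a cusp point by construction) with $t_0=1$, and then splits on the isotropy of the tangency point $0+j$ via Proposition~\ref{propD3orC3}. In the $D_3$ case it writes down explicit generators, bounds $[\GQ:\Gamma']\le 2$ by a covolume estimate, eliminates index $2$ by a cusp-killing isotropy-graph argument, and in index $1$ reads off that the translation sublattice $A_Q$ is $\sqrt3\,\Z[\omega]$---so any meridian translates by a non-unit, contradicting Lemma~\ref{lem:xIsAUnit}. In the $C_3$ case the explicit element one finds forces $k\Gamma=\Q(\sqrt{-3})$ and hence arithmeticity, whereupon Reid's theorem and a degree/torsion count finish. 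Your unified ``$|c|^2=3m$'' argument collapses these cases into one clean line, which is appealing, but it only goes through once the $D_3/C_3$ analysis (or something equivalent) has already located $\gamma$ and secured integrality---precisely the work the paper does and your sketch defers.
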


\begin{proof}
We begin by assuming that $Q$ has a $S^2(3,3,3)$ cusp and has cusp volume $\frac{\sqrt{3}}{4}$, and $\Gamma_Q$ admits integral traces.
Consider a horoball diagram for the fundamental domain of Q viewed from the point at $\infty$ (see Fig \ref{fig:sqrt3over4case}). 
In this figure, $O=0$, $Y=\frac{\sqrt{3}+i}{2}$, 
$X=\frac{\sqrt{3}-i}{2}$, and $D=\sqrt{3}$ and there are horoballs that are tangent to the horoball at $\infty$, which are of Euclidean diameter 1 tangent and 
to $\partial \Hth$ at $0$ and $D$. Also, there are elliptic elements of order 3 in $\Gamma_Q$ fixing 
$0$ and $\infty$, $X$ and $\infty$, $Y$ and $\infty$, and $D$ and $\infty$.

\begin{figure}
\centering
\resizebox{8cm}{!}{\input{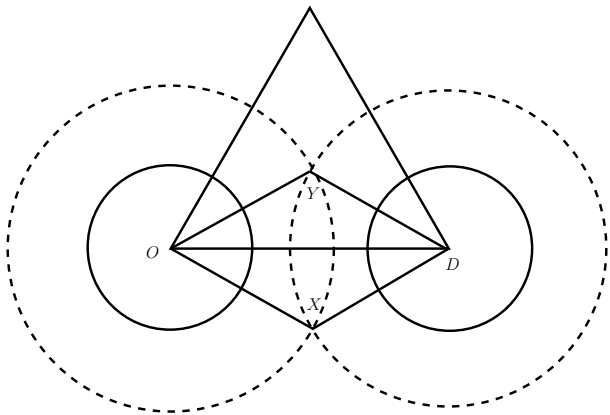}}
\caption{\label{fig:sqrt3over4case} A fundamental domain for $Q$ lies above $OXDY$}
\end{figure}

We know that the point stabilizer of $0+j$ is $D_3$ or $C_3$ (see Prop \ref{propD3orC3}).

\textbf{Case 1:} Assume the point stabilizer of $0+j$ is $D_3$.

Then, there is an element $\gamma =\begin{pmatrix}
 0 & i e^{i\theta}\\
i e^{-i \theta} & 0
\end{pmatrix}$, which fixes $e^{i \theta}$ and $-e^{i \theta}$.
Therefore, the isometric sphere corresponding to $\gamma$ has radius 1 and is centered
at 0. Also, let $r$ be the element of order 3 fixing $Y$ and $\infty$. Then, $r = 
 \begin{pmatrix}
 \omega & \frac{\sqrt{3}+i}{2}\\
0 & \omega^{-1}
\end{pmatrix}$ and  $r \gamma r^{-1}$ admits an isometric sphere of radius 1 centered at 
$\sqrt{3}$. The boundaries  of these isometric spheres in $\mathbb{C}$ are depicted by dotted lines in
Figure \ref{fig:sqrt3over4case}. 
Finally, let $t = 
\begin{pmatrix}
 1 & \sqrt{3}\\
0 & 1
\end{pmatrix}.$

Since $\Gamma' = \langle \gamma, r, t \rangle$ is a subgroup with covolume $\leq v_0$ (see Fig \ref{fig:sqrt3over4case}),
it must be finite index in $\Gamma_Q$. Also, by combining the assumption that cusp volume is $\frac{\sqrt{3}}{4}$ with the upper bound on the
cusp density of $\frac{2}{v_0\sqrt{3}}$
(see $\S$\ref{sect:rigidCusps}), we know that $covolume(\Gamma_Q) \geq \frac{v_0}{2}$. Hence, $[
\Gamma_Q:\Gamma']=1,2$.

If $[\Gamma_Q:\Gamma']=2$, then $covolume(\Gamma_Q)=\frac{v_0}{2}$ and there are horoballs 
based at $\frac{\sqrt{3}+i}{2}$ and  $\frac{\sqrt{3}-i}{2}$ of Euclidean diameter 1. Thus, by Proposition \ref{propD3orC3}, the point stabilizers
above these points are either both $D_3$ or both $C_3$. Hence, the cusp corresponds to a vertex in the isotropy graph that is either 1) connected
to three vertices labeled by $D_3$ isotropy groups or  2) there is a loop labeled by 3-torsion and the cusp connects to one vertex labeled 
by a $D_3$ isotropy group (see Fig \ref{fig:loopOr3D3}). 
 In either case, $\Gamma_Q$ cannot be trivial under the cusp killing homomorphism.

\begin{figure}
\centering
\resizebox{8cm}{!}{\input{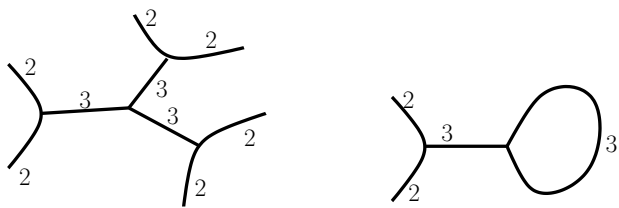}}
\caption{\label{fig:loopOr3D3} The two possible types of isotropy graphs for the orbifold $Q$ described in Case 1 of Lemma \ref{noLen6exceptionalslope}
}
\end{figure}

Therefore, we consider the case that $[\Gamma_Q:\Gamma']=1$. Here, $\mbox{tr}(\gamma\cdot r) = -i (\frac{\sqrt{3}+i}{2}) e^{-i\theta}$.
Since, $-i(\frac{\sqrt{3}+i}{2})$ is a unit and we are assuming integral traces, $e^{-i \theta}$ is an algebraic integer. Hence,
$$\left\langle 
\begin{pmatrix}
 0 & i e^{i\theta}\\
i e^{-i \theta} & 0
\end{pmatrix},
\begin{pmatrix}
 1 & \sqrt{3}\\
0 & 1
\end{pmatrix},
\begin{pmatrix}
 \omega & \frac{\sqrt{3}+i}{2}\\
0 & \omega^{-1}
\end{pmatrix} \right\rangle$$ 
is a representation of $\Gamma_Q$ with parabolic elements fixing $0$ and $\infty$ where
all entries of the generators are algebraic integers. 
If $\Gamma_K \subset \Gamma_Q$, then $\GK$ admits integral representation.
 However, the maximal abelian subgroup $A_Q$ of $P_Q$ is of the form:
$$ 
\left\langle
\begin{pmatrix}
 1 & \sqrt{3}\\
0 & 1
\end{pmatrix},
\begin{pmatrix}
 1 & \omega \sqrt{3}\\
0 & 1
\end{pmatrix}
\right\rangle.$$ 
In particular, $A_Q$ vanishes under reduction modulo the prime ideal $I$, if $\sqrt{3} \in I$. 
Therefore, no knot group $\Gamma_K$ is a subgroup of $\Gamma_Q$ by Lemma \ref{lem:xIsAUnit}.

\textbf{Case 2:} The point stabilizer of $0+j$ is $C_3$. 

In this case, there is a group element $\gamma'$ that identifies $0+j$ with a point above either $\frac{\sqrt{3}+i}{2}$ or $\frac{\sqrt{3}-i}{2}$.
We may assume that $0+j$ is identified with $\frac{\sqrt{3}+i}{2} + cj$. Since $\gamma'$ can be decomposed into reflections in the plane
defined by hemisphere of
radius 1 centered at 0 

and vertical planes, $0+j$ and  $\frac{\sqrt{3}+i}{2} + cj$ are the same Euclidean distance above $\mathbb{C}$ and $c=1$. 

Hence, under $\gamma'$, $\infty \mapsto 0$, $\frac{\sqrt{3}+i}{2} \mapsto \infty$, and $\frac{\sqrt{3}+i}{2} + j \mapsto j$. Here,
$\gamma' = 
\begin{pmatrix} 
 0 & \frac{ \sqrt{3}+i}{2}\\
\frac{- \sqrt{3}+i}{2} & 1
\end{pmatrix}$. 

Let $r' =\begin{pmatrix} 
 \omega & \frac{-\sqrt{3}-3i}{2}\\
0 & \omega^{-1}
\end{pmatrix}.$  Since $\gamma'$ admits an isometric sphere of radius 1 at $\frac{ \sqrt{3}+i}{2}$, 
$\gamma'^{-1}$ admits an isometric sphere of radius 1 at $0$, and $r' \cdot \gamma'^{-1} \cdot r'^{-1}$
admits an isometric sphere of radius 1 at $\sqrt{3}$.

Hence, $\Gamma_3 = \langle \gamma', r', t \rangle$ is a subgroup of finite covolume (here $t$ is defined in Case 1) and 
$\Gamma_3$ is finite index in $\Gamma_Q$.
However, $k\Gamma_3 = \Q(\sqrt{-3})$ and $\Gamma_3$ has integral traces. Thus, $\Gamma_3$ is arithmetic and therefore, $\Gamma_Q$ is arithmetic (see \cite[Thm 8.3.2]{MR}). 
However, 
the only knot complement that can cover $Q$ is the figure 8 knot complement (see \cite{Reid1}).
Cusp volume considerations would force the figure 8 knot complement to be a 4-fold cover of $Q$.  However, $Q$ has 3 torsion on the cusp. 
Hence in this case, $Q$ is not covered
by a hyperbolic knot complement.

Finally, if $Q$ has a $S^2(2,3,6)$ cusp and cusp volume $\frac{\sqrt{3}}{8}$ and $\Gamma_Q$ admits integral traces. Then, the point stabilizer of $0+j$ is $D_6$. 
Hence, an identical argument to Case 1
shows $Q$ is not covered by a knot complement. 
\end{proof}

\subsection{Bounding the degree of covering}\label{sect:degreeBound}

In this section, we expand upon Lemma \ref{lem:lowerOrbLensBound}. However, unlike that lemma, we assume that $p\co M \rightarrow Q$ where M is a manifold.  

\begin{lem}
\begin{enumerate}\label{lem:degBound}
\item  If  $Q$ has a $S^2(3,3,3)$ cusp, then $deg(p)=12n$ $n\geq 1$.
\item If $Q$ has a $S^2(2,4,4)$ cusp, then the 
$deg(p)\geq 24$. 

\end{enumerate}
\end{lem}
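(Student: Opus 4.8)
The plan is to convert $\deg(p)$ into an index of peripheral subgroups and then read off the divisibility from the fact that $M$ is a \emph{manifold}, using the cusp-killing homomorphism (Proposition \ref{cusp_kill}) to control the isotropy graph of $Q$. Since $\Sth-K$ has one cusp and surjects onto $M$ and then onto $Q$, both $M$ and $Q$ have exactly one cusp, so there is a single double coset $\Gamma_M\backslash\GQ/P_Q$ and hence $\deg(p)=[P_Q:P_M]$. As $M$ is a manifold, $P_M$ is torsion-free, so it lies in the subgroup $\Lambda\subset P_Q$ of parabolic elements (the translation sublattice of the crystallographic group $P_Q$), which gives $[P_Q:\Lambda]\mid\deg(p)$: that is $3\mid\deg(p)$ in case (1) and $4\mid\deg(p)$ in case (2). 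More usefully, each preimage in the manifold $M$ of a vertex $v$ of the isotropy graph of $Q$ is a nonsingular point at which $p$ has local degree $|G_v|$, so summing over the fiber gives $|G_v|\mid\deg(p)$ for every vertex $v$ of the isotropy graph. It therefore remains to force the isotropy graph to contain a vertex supplying the missing factor.

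Since $\Sth-K$ covers $Q$, Proposition \ref{cusp_kill} gives $|Q|\cong D^3$, every component of the isotropy graph meets the cusp, and the cusp-killing homomorphism trivializes $\GQ$. For case (1), a handshake count on the isotropy graph (three univalent boundary vertices at the order-$3$ cone points, interior vertices all trivalent after resolving degree-$2$ vertices) shows the number of interior trivalent vertices is odd, hence nonzero; any interior trivalent vertex adjacent to an order-$3$ edge has isotropy $D_3$, $A_4$, $S_4$, or $A_5$, since $(3,3,3)$ is not a trivalent vertex type (Figure \ref{fig:FiveTypesOfIntCones}). If there were no $A_4$, $S_4$, or $A_5$ vertex, every trivalent vertex would be some $D_n$ and the order-$3$ cone axes would meet only $D_3$'s; but erasing the order-$3$ edge at a $D_3$ relabels its two order-$2$ edges by $\gcd(2,2)=2$, so no order-$2$ edge could ever be relabeled to the identity (that requires a partner of order coprime to $2$), contradicting triviality of the cusp-killing homomorphism. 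Hence some vertex has isotropy $A_4$, $S_4$, or $A_5$, and as $12$ divides each of $|A_4|=12$, $|S_4|=24$, $|A_5|=60$, we conclude $12\mid\deg(p)$.

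For case (2), if the isotropy graph has a loop labeled $4$ then Proposition \ref{prop:nofourloop} gives $\deg(p)\ge24$ directly. Otherwise each order-$4$ cone axis meets a $D_4$ or an $S_4$ vertex (Proposition \ref{prop:abelianQuotients}); an $S_4$ gives $24\mid\deg(p)$ at once. In the remaining case all order-$4$ cone axes meet $D_4$ vertices, so $8\mid\deg(p)$, and erasing those cone axes relabels the order-$2$ edges at the $D_4$'s by $\gcd(2,2)=2$, leaving order-$2$ edges that must still be killed. As in case (1) this cannot happen unless some edge of odd order is present, so there is a vertex $u$ with $|G_u|\in\{12,24,60\}\cup\{2m:\ m\ \text{odd},\ m\ge3\}$; in every case $\operatorname{lcm}(8,|G_u|)\ge24$ and divides $\deg(p)$, so $\deg(p)\ge24$.

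The step I expect to be the real work — glossed over above — is making the isotropy-graph analysis airtight: one must argue carefully, using the Remark following Proposition \ref{cusp_kill} together with the classification of trivalent points in Figure \ref{fig:FiveTypesOfIntCones}, that the cusp-killing relabelling process cannot trivialize an order-$2$ edge unless that edge is eventually amalgamated with an edge of odd order, and hence that an orbifold with an $S^2(3,3,3)$ or $S^2(2,4,4)$ cusp which is covered by a knot complement genuinely cannot avoid an $A_4$, $S_4$, $A_5$, or odd-order dihedral vertex. This is a finite but delicate case check, and it is precisely where the hypotheses that $\Sth-K$ is a knot complement (so the cone locus is connected to the cusp and cusp-killing is trivial) and that $M$ is a manifold (so $\Gamma_M$, and in particular $P_M$, is torsion-free) have to be used in tandem.
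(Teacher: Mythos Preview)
Your proof is correct and follows essentially the same route as the paper: both arguments force an $A_4$, $S_4$, or $A_5$ vertex (case 1) or an odd-order edge alongside a $D_4$ (case 2) via the cusp-killing homomorphism, then read off the divisibility $|G_v|\mid\deg(p)$ from $M$ being a manifold. Your handshake count and explicit peripheral-index setup are mild organizational additions, and the cusp-killing step you flag as ``glossed over'' is handled at exactly the same level of informality in the paper itself.
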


\begin{proof}
\textbf{1) Assume that $Q$ has a $S^2(3,3,3)$ cusp.}
First, consider the isotropy graph of $Q$. If there is a loop in the isotropy graph,
then the other edge emanating from the cusp cannot connect the cusp to a point with isotropy group $D_3$. For this case, 
$\Gamma_Q$ would be non-trivial under the cusp killing homomorphism (see Fig \ref{fig:dIs12}). 

\begin{figure}
\centering
\resizebox{5cm}{!}{\input{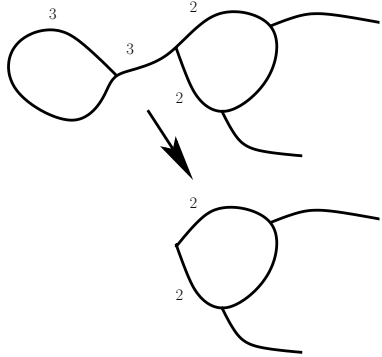}}
\caption{\label{fig:dIs12} Application of the cusp killing homomorphism to a graph with a cycle labeled 3 and a vertex labeled $D_3$}
\end{figure}

Therefore, this vertex is fixed by a group $G$ where $G$ is either $A_4$, $S_4$ or $A_5$ and to lift to a torsion-free 
group $deg(p)$ must be a multiple of the order of $G$. Hence, $deg(p)=12n$ ($n \in \Z$). 

If we assume that there is no edge in the isotropy graph with both endpoints on the cusp, then 
there must be at least one vertex adjacent to the cusp labeled with a $A_4$, $S_4$ or $A_5$ subgroup. Otherwise,
all vertices are labeled with $D_3$ and just as above $\Gamma_Q$ would be non-trivial under the cusp killing homomorphism.
Thus,  $deg(p)=12n$. 

\textbf{2)  Assume that $Q$ has a  $S^2(2,4,4)$ cusp.}
Since $\Sth-K$ is a manifold, all isotropy subgroups of $\Gamma_Q$ must vanish in the
lift. Either, the order 4 elements in the cusp are part of the same fixed axis (see Fig
\ref{fig:GraphWith4Loop}) or the four torsion on the cusp connects to a pair of distinct vertices
in the isotropy graph.

In the first case, $deg(p)\geq 24$ (see Prop \ref{prop:nofourloop}). In the second case,
the vertices are either of type  $D_4$
or $S_4$ isotropy subgroup. If there is a vertex of type $S_4$, then $deg(p)\geq 24$. 
If we have a vertex of type $D_4$. There must be some edges in the isotropy graph labeled
with odd integers otherwise the graph would be non-trivial under the cusp killing homomorphism. 
Thus, $deg(p) = 8(2k+1)n$ for some $n,k\geq 1$ and  $deg(p)\geq 24$ .   
\end{proof}

We discuss length
for peripheral elements fixing $\infty$ below. Let $Q$ be a 1-cusped hyperbolic
3-orbifold and fix a representation for $\Gamma_Q$ in $\PSLTC$ such that $P_Q$ is upper triangular and 
we consider $\Gamma_Q$ acting on upper half space.
Denote by $\frac{1}{c}$ the height of a 
maximal horoball tangent to $\infty$ and denote by $S_c$ the horosphere centered at $\infty$ of Euclidean height $\frac{1}{c}$.   
If $\gamma\in \Gamma_Q$ is a parabolic element fixing $\infty$, we measure the $len(\gamma)$ by its translation length in $S_c$.
If $\gamma=\begin{pmatrix} 1 & x \\ 0 & 1 \end{pmatrix} $, then $len(\gamma)=c\cdot|x|$. 
Therefore, if $\gamma$ corresponds to an exceptional slope,
then by the Six Theorem $c \cdot |x| \leq 6$ (see \cite{Agol6Theorem}, \cite{Lackenby6Theorem}). 
Finally, by Lemma \ref{lem:xIsAUnit}, we will only consider representation of $\Gamma_Q$ such that
$\begin{pmatrix} 1 & 1 \\ 0 & 1 \end{pmatrix}\in \Gamma_Q$.  Since the interiors of maximal horoballs are disjoint, we know that $c\geq 1$. 
Also, $\begin{pmatrix} 1 & x \\ 0 & 1 \end{pmatrix}$ and $\begin{pmatrix} 1 & -x \\ 0 & 1 \end{pmatrix}$ correspond to the same
slopes in terms of Dehn surgery parameters, so for convenience we consider them as one curve in our accounting of short parabolic
elements. 

With $\Gamma_Q$, $P_Q$ as above, recall that if $A$ is the area of the the fundamental domain
for $P_Q$ in the horosphere of Euclidean height 1, the cusp volume of $Q$ is
$$\int_{\frac{1}{c}}  ^{\infty} \frac{A}{z}dz=\frac{c^2\cdot A}{2}.$$

\begin{proof}[Proof of Theorem \ref{mainthm2}.]

Assume $M$ admits two exceptional surgeries, $M$ is not covered by the figure 8 knot complement, and $M$ is covered by a small knot complement.  The second assumption is equivalent to $M$ being non-arithmetic (see \cite{Reid1}) and the third hypothesis implies that
$\GM$ has integral traces (see \cite{Bass1980}).
Also, assume $p\co M \rightarrow Q$, where $Q$ has a rigid cusp (see $\S$\ref{sect:rigidCusps}). Finally, we assume that $\GQ \subset \PSLTA$  and $P_Q$ is upper triangular (see Lem \ref{lem:intTracesUpperTri}).
We break the proof into three cases, one for each cusp type of $Q$.

\textbf{Case 1: $Q$ has a $S^2(2,4,4)$ cusp.}
We know $deg(p)\geq 24$ (see Lemma \ref{lem:degBound}).
By Lemma \ref{lem:xIsAUnit} and the Six Theorem (see \cite{Agol6Theorem}, \cite{Lackenby6Theorem}), $P_M$ is of the form:
$\left\langle \begin{pmatrix} 1 & 1 \\ 0 & 1 \end{pmatrix}, \begin{pmatrix} 1 & 6i \\ 0 & 1 \end{pmatrix} \right\rangle$ 
and $c=1$ (with $c$ defined above). In this case, the cusp volume of $Q$ is $\frac{3}{24}$.
 There is one such orbifold, which is arithmetic, contradicting our hypothesis.

\textbf{Case 2: $Q$ has a $S^2(3,3,3)$ cusp.} By Lemma \ref{lem:xIsAUnit}, we can find a representation for $\GM$ where the $P_M$ is of the form:
$
\left\langle \begin{pmatrix} 1 & 1 \\ 0 & 1 \end{pmatrix}, \begin{pmatrix} 1 & n\omega \\ 0 & 1 \end{pmatrix}\right\rangle$, 
such that $\omega^2 + \omega + 1 = 0$. By the Six Theorem (see \cite{Agol6Theorem}, \cite{Lackenby6Theorem}),
$n \leq 6$. However, $3n$ must be a multiple of 12. Hence, $n=4$ and $deg(p)=12$ (see Lemma \ref{lem:degBound}).
Here, the two shortest parabolic elements (excluding inverses) are $\mu=\begin{pmatrix} 1 & 1 \\ 0 & 1 \end{pmatrix}$ and $\lambda=\begin{pmatrix} 1 & 2+4\omega \\ 0 & 1 \end{pmatrix}$,
($|2+4\omega|=2\sqrt{3}$). 

In order to have 
two curves $\gamma_1$, $\gamma_2 \in P_M$ with $len(\gamma_i) \leq 6$, the horoballs tangent to $B_{\infty}$
have Euclidean height greater than $\frac{1}{\sqrt{3}}$ and so $c\leq \sqrt{3}$. (Note if $\lambda$ is a longer element, say $\lambda=\begin{pmatrix} 1 & 3+4\omega \\ 0 & 1 \end{pmatrix}$, then we must have $c\leq \frac{6}{\sqrt{13}}< \sqrt{3}$).
 Thus, the cusp volume of $M$ is in the range $[\sqrt{3}, 3\sqrt{3}]$
(1 $\leq |c| \leq \sqrt{3}$)
and the cusp volume of $Q$ is in the range $[\frac{\sqrt{3}}{12}, \frac{\sqrt{3}}{4}]$.

Since any orbifold $Q$ with $S^2(3,3,3)$ cusp and cusp volume 
$\frac{\sqrt{3}}{12}$, $\frac{\sqrt{3}}{6}$, or $\frac{1}{4}$ 
 is 
arithmetic (see Prop \ref{prop:AdamsAndNR2}).
Since these orbifolds are excluded by hypothesis, 
we only have to consider orbifolds with possible cusp volume,
$\frac{3\sqrt{3}+ \sqrt{15}}{24}$, $\frac{\sqrt{21}}{12}$, or $\frac{\sqrt{3}}{4}$ (see Thm \ref{thm:AdamsCusps}).  The first case, implies that $Q$ is $\Hth/\Gamma(5,2,2,3,3,3)$ which has 
an order 60 isotropy group (\cite[$\S$4.7]{MR}). Thus, it cannot have a 12-fold manifold cover. In the second case, we know no such orbifold can be covered by a knot complement by Lemma \ref{nosqrt21over12}. Finally,
we show
the third case cannot occur by appealing to Lemma \ref{noLen6exceptionalslope}.

\textbf{Case 3: $Q$ has $S^2(2,3,6)$ and $M$ admits a non-trivial symmetry} 
Since the element of $6$ torsion is part of a dihedral group of order 12, we know $deg(p)=12n$. 

If $deg(p)>24$, we claim $M$ cannot admit two exceptional surgeries. In  this case, 
$P_M =\left\langle \begin{pmatrix} 1 & 1 \\ 0 & 1 \end{pmatrix}, \begin{pmatrix} 1 & 2 n \omega \\ 0 & 1 \end{pmatrix} \right\rangle$
with
 $n > 3$ and
$\omega^2+\omega+1=0$. Since $|c|\geq 1$, if $\gamma\in P_M$ with $len(\gamma)\leq 6$, then $\gamma=\begin{pmatrix} 1 & \pm1 \\ 0 & 1 \end{pmatrix}$ 
However, these curves both correspond to surgery along the meridian.

If $deg(p)=24$, then $P_M$ is same as in Case 2 above. 
Therefore, $M$ has cusp volume in $[\sqrt{3},3\sqrt{3}]$ and $Q$ has cusp volume in $[\frac{\sqrt{3}}{24}, \frac{\sqrt{3}}{8}]$. 
For cusp volume in $[\frac{\sqrt{3}}{24}, \frac{\sqrt{3}}{8})$, 
these orbifolds fit Adams' list and only the figure 8 knot complement can 24-fold cover $Q$. This follows from Case 2 above. 
If the cusp volume is exactly $ \frac{\sqrt{3}}{8}$, we appeal to Lemma \ref{noLen6exceptionalslope}.

If $deg(p)=12$, we may consider  $ \Gamma_Q=P_Q \cdot \GM$. In this case, 
$$P_Q = \left\langle r= \begin{pmatrix} \ell & 0\\ 0 & \ell^{-1} \end{pmatrix},  t= \begin{pmatrix} 1 &  \omega \\ 0 & 1 \end{pmatrix}, \mu = \begin{pmatrix} 1 &  1 \\ 0 & 1 \end{pmatrix} \right\rangle$$
where $\ell = e^{\frac{i\pi}{6}}$ and $\mu$ is the meridian of the knot complement which covers $M$. 

Since $t\not\in\GM$, we first note that $r,r^2 \not\in N(\GM)$, where $N(
\GM)$ is the normalizer of $\GM$ in $\PSLTC$.
Also, $t \not\in N(\GM)$ (see Lem \ref{lem:lowerOrbLensBound}). 

Therefore, the only symmetry $M$ may admit can be realized by $r^3$ and we may assume $N(\GM)= \langle r^3, \GM \rangle$. Then the conjugates of $\GM$ in $P_Q\cdot \GM$ are $\GM$, $r\cdot \GM \cdot r^{-1}$,$r^{2}\cdot \GM \cdot r^{-2}$,
$t\cdot\GM\cdot t^{-1}$, $rt\cdot \GM \cdot (rt)^{-1}$ and $r^{2}t\cdot \GM \cdot t^{-1}r^{-2}$.  In this case, $t$ maps to product of three 2-cycles in $S_6$. 
Hence, $P_Q \cdot\GM$ has a $\Z/2\Z$ quotient. Therefore, $Q$ is covered by an orbifold $Q'$ with a $S^2(3,3,3)$ cusp with $[P_Q:P_{Q'}]=2$ and $\Gamma_{Q'}=P_{Q'}\cdot\GM$.
However, $M$ would be a 6-fold cover of $Q'$, where $Q'$ has a $S^2(3,3,3)$ cusp. 
However, this is a contraction to 
the minimum degree cover of $p'\co M \rightarrow Q'$ (see
Lem \ref{lem:degBound}).
\end{proof}

\section{Further Remarks}\label{sect:FurtherRemarks}

We conclude by noting that there are knot complements that admit exceptional surgeries and are not small. 
In fact, by work of Baker (see \cite{BakerLarge}), there are knot complements that admit (non-trivial) finite cyclic fillings that are not small. 
It remains unknown whether any of these knot complements admit hidden symmetries. 
 
In the case where the knot complements cover a manifold of small volume more can be said.
Consider manifolds $\beta_{n,m}$ that arise from (n,m) surgery on the unknotted cusp of 
 the Berge Manifold as in \cite{H2010}. Note, here we also impose that $(n,7)=1$, so that 
 $\beta_{n,m}$ is covered by three knot complements (see \cite[Lem 3.1]{H2010}) and $(n,m)=1$ so that  $\beta_{n,m}$ 
 is a manifold. We note that 
 $vol(\beta_{n,m}) < 4 v_0$, where $v_0$ is the volume of a the regular ideal tetrahedron.

Suppose $Q$ is an orbifold with a $S^2(3,3,3)$ or $S^2(2,4,4)$ cusp that is covered by a $\beta_{n,m}$ for some pair $(n,m)$.
 By Lemma \ref{lem:degBound}, $vol(Q)<\frac{v_0}{3}$ in the $S^2(3,3,3)$ case and $vol(Q)<\frac{v_0}{6}$ in the $S^2(2,4,4)$ case.
 In either case, such a $Q$ would be arithmetic (see Prop \ref{prop:AdamsAndNR2}) 
 and therefore could not be covered by more than one knot complement by \cite{Reid1}.
 
 If $Q$ has a $S^2(2,3,6)$ cusp,  then we use the fact that $Q$ must have an isotropy group which is dihedral and order 12. Hence,
 if $p\co \beta_{n,m} \rightarrow Q$, $deg(p)=12n$. If $deg(p)\geq24$, then $vol(Q)<\frac{v_0}{6}$ and  would be arithmetic by Proposition 
 \ref{prop:AdamsAndNR2} and just as above this is a contradiction as these orbifolds are covered by more than one knot complement (see \cite[Lem 3.1]{H2010}) and are therefore non-arithmetic (see \cite{Reid1}). If $deg(p)=12$, we note that  since the Berge manifold is strongly invertible, each $\beta_{n,m}$  can be realized 
 as the double brach cover of a tangle filling on the quotient of the Berge manifold by a 
 strong inversion. Hence, $\beta_{n,m}$ admits a symmetry. Therefore we may appeal to the proof of Theorem \ref{mainthm2} to see that 
 there can be no such covering map $p$ of degree 12. Thus, no manifold $\beta_{n,m}$ can cover an orbifold with a rigid cusp and each of the knot complements that cover a $\beta_{n,m}$ (where $\beta_{n,m}$ is a manifold) do not admit hidden symmetries (see \cite[Prop 9.1]{NR1}). 

The above argument is an improvement over \cite{H2010} 
     in the sense that the above argument  produces an explicit infinite family of knot complements that cannot admit hidden symmetries.

\bibliographystyle{plain}  
\bibliography{NRHBib} 

\end{document}